\documentclass[a4paper,12pt]{article}
\usepackage{amsmath}
\usepackage{amsthm}
\usepackage{amssymb}
\usepackage{framed}
\usepackage[inline]{enumitem}
\usepackage{color}
\usepackage{xfrac}
\usepackage[all]{xy}
\usepackage[dvipsnames]{xcolor}
\usepackage[utf8]{inputenc}
\usepackage{setspace}
\usepackage{currfile}
\usepackage{comment}
\usepackage{mathabx}
\usepackage{textcomp}
\usepackage{cancel}
\usepackage{mathrsfs}
\usepackage[unicode]{hyperref}
\usepackage{cleveref}
\usepackage{thmtools}
\usepackage{booktabs}


\usepackage[ 
sorting=nyt,
style=numeric-comp,
maxbibnames=4,
firstinits=true,
block=space,
backend=biber
]{biblatex}
\usepackage{newunicodechar}
\newunicodechar{⟨}{\langle}
\newunicodechar{⟩}{\rangle}

\setlength{\bibitemsep}{1em}
\setlength{\bibhang}{2em}
\usepackage[unicode]{hyperref}

\renewbibmacro{in:}{%
  \ifentrytype{article}{}{\printtext{\bibstring{in}\intitlepunct}}}

\DeclareSourcemap{
  \maps[datatype=bibtex, overwrite]{
    \map{
      \step[fieldset=language, null]
      \step[fieldset=shorthand, null]
    }
  }
}

\DeclareFieldFormat[article]{title}{#1}
\DeclareFieldFormat[incollection]{title}{#1}

\DeclareFieldFormat[unpublished]{note}{#1\addcomma\space}
\DeclareFieldFormat[thesis]{type}{#1\addcomma\space}

\DeclareFieldFormat{pages}{#1}
\DeclareFieldFormat[book]{pages}{}

\DeclareBibliographyDriver{unpublished}{%
  \usebibmacro{bibindex}%
  \usebibmacro{begentry}%
  \usebibmacro{author}%
  \setunit{\labelnamepunct}\newblock
  \usebibmacro{title}%
  \newunit
  \printlist{language}%
  \newunit\newblock
  \usebibmacro{byauthor}%
  \newunit\newblock
  \printfield{howpublished}%
  \newunit\newblock
  \printfield{note}%
  \newunit\newblock
  \usebibmacro{location+date}%
  \newunit\newblock
  \iftoggle{bbx:url}
  {\usebibmacro{doi+eprint+url}}
  {}%
  \newunit\newblock
  \usebibmacro{addendum+pubstate}%
  \setunit{\bibpagerefpunct}\newblock
  \usebibmacro{pageref}%
  \newunit\newblock
  \iftoggle{bbx:related}
  {\usebibmacro{related:init}%
    \usebibmacro{related}}
  {}%
  \usebibmacro{finentry}}

\hypersetup{
  colorlinks,
  linkcolor={blue!80!black},
  citecolor={blue!80!black},
  urlcolor={blue!80!black}
}

\DeclareFontFamily{U}{mathx}{\hyphenchar\font45}
\DeclareFontShape{U}{mathx}{m}{n}{
  <5> <6> <7> <8> <9> <10>
  <10.95> <12> <14.4> <17.28> <20.74> <24.88>
  mathx10
}{}
\DeclareSymbolFont{mathx}{U}{mathx}{m}{n}
\DeclareFontSubstitution{U}{mathx}{m}{n}
\DeclareMathAccent{\widecheck}{0}{mathx}{"71}
\DeclareMathAccent{\wideparen}{0}{mathx}{"75}
\DeclareMathOperator{\Pf}{Pf}

\newcommand{\coleq}{\mathrel{\mathop:}=}

\newcommand{\bine}{\mathbin{\varepsilon}}
\newcommand{\e}{\varepsilon}

\declaretheorem[name=Theorem,refname={theorem,theorems},Refname={Theorem,Theorems}]{theorem}

\declaretheorem[name=Proposition,refname={proposition,propositions},Refname={Proposition,Propositions},sibling=theorem]{proposition}
\declaretheorem[name=Corollary,refname={corollary,corollaries},Refname={Corollary,Corollaries},sibling=theorem]{corollary}
\declaretheorem[name=Lemma,refname={lemma,lemmas},Refname={Lemma,Lemmas},sibling=theorem]{lemma}
\declaretheorem[name=Remark,refname={remark,remarks},Refname={Remark,Remarks},sibling=theorem]{remark}
\declaretheorem[name=Remarks,refname={remark,remarks},Refname={Remark,Remarks},sibling=theorem]{remarks}

\crefname{proposition}{proposition}{propositions}

\bibliography{dtk}

\newcommand{\cB}{{\mathscr{B}}}
\newcommand{\bZ}{{\mathbb{Z}}}
\newcommand{\cC}{\mathscr{C}}
\newcommand{\cD}{\mathscr{D}}
\newcommand{\cE}{\mathscr{E}}
\newcommand{\cF}{\mathscr{F}}
\newcommand{\cH}{\mathscr{H}}
\newcommand{\cU}{\mathscr{U}}
\newcommand{\cJ}{\mathscr{J}}
\newcommand{\cL}{\mathscr{L}}
\newcommand{\cM}{\mathscr{M}}
\newcommand{\cO}{\mathscr{O}}
\newcommand{\cS}{\mathscr{S}}
\newcommand{\bN}{\mathbb{N}}
\newcommand{\bR}{\mathbb{R}}
\newcommand{\bC}{\mathbb{C}}
\newcommand{\pd}{\partial}
\newcommand{\ud}{\mathrm{d}}

\newcommand{\abso}[1]{\left|#1\right|}
\newcommand{\norm}[1]{\left\lVert#1\right\rVert}

\setlength{\parindent}{0pt}
\setlength{\parskip}{1.5ex}

\title{A simpler description of the $\kappa$-topologies on the spaces $\cD_{L^p}$, $L^p$, $\cM^1$}
\author{Christian Bargetz\footnote{Institut für Mathematik, Universität Innsbruck, Technikerstraße 13, 6020 Innsbruck, Austria. e-mail: \href{mailto:christian.bargetz@uibk.ac.at}{christian.bargetz@uibk.ac.at}.}, Eduard A.~Nigsch\footnote{Wolfgang Pauli Institut, Oskar-Morgenstern-Platz 1, 1090 Wien, Austria. e-mail: \href{mailto:eduard.nigsch@univie.ac.at}{eduard.nigsch@univie.ac.at}.}, Norbert Ortner\footnote{Institut für Mathematik, Universität Innsbruck, Technikerstraße 13, 6020 Innsbruck, Austria. e-mail: \href{mailto:mathematik1@uibk.ac.at}{mathematik1@uibk.ac.at}.}}
\date{November 2017}

\begin{document}
\maketitle

\begin{abstract}
The $\kappa$-topologies on the spaces $\cD_{L^p}$, $L^p$ and $\cM^1$ are defined by a neighbourhood basis consisting of polars of absolutely convex and compact subsets of their (pre-)dual spaces. In many cases it is more convenient to work with a description of the topology by means of a family of semi-norms defined by multiplication and/or convolution with functions and by classical norms. We give such families of semi-norms generating the $\kappa$-topologies on the above spaces of functions and measures defined by integrability properties. In addition, we present a sequence-space representation of the spaces $\cD_{L^p}$ equipped with the $\kappa$-topology, which complements a result of J.~Bonet and M.~Maestre. As a byproduct, we give a characterisation of the compact subsets of the spaces $\cD'_{L^p}$, $L^p$ and $\cM^1$.
\end{abstract}

{\bfseries Keywords: }{locally convex distribution spaces, topology of uniform convergence on compact sets, $p$-integrable smooth functions, compact sets}

{\bfseries MSC2010 Classification: }{46F05, 46E10, 46A13, 46E35, 46A50, 46B50}

\section{Introduction}\label{sec1}

In the context of the convolution of distributions, L.~Schwartz introduced the spaces $\cD_{L^p}$ of $C^\infty$-functions whose derivatives are contained in $L^p$ and the spaces $\cD'_{L^q}$ of finite sums of derivatives of $L^q$-functions, $1 \le p,q \le \infty$. The topology of $\cD_{L^p}$ is defined by the sequence of \mbox{(semi-)}norms
\[
  \cD_{L^p} \to \bR_+,\quad \varphi \mapsto p_m(\varphi) \coleq \sup_{\abso{\alpha} \le m} \norm{\pd^\alpha \varphi}_p,
\]
whereas $\cD'_{L^q} = (\cD_{L^p})'$ for $\sfrac{1}{p} + \sfrac{1}{q} = 1$ if $p< \infty$ and $\cD'_{L^1} = (\dot\cB)'$, carry the strong dual topology.

Equivalently, for $1 \le p < \infty$, by barrelledness of these spaces, the topology of $\cD_{L^p}$ is also the topology $\beta(\cD_{L^p}, \cD'_{L^q})$ of uniform convergence on the \emph{bounded sets} of $\cD'_{L^q}$. In the case of $p=1$, the duality relation $(\dot{\cB}')'=\cD_{L^1}$, see~\cite[Proposition~7, p.~13]{Ba}, provides that $\cD_{L^1}$ also has the topology of uniform convergence on bounded sets of $\dot{\cB}'$. Following L.~Schwartz in~\cite{Sch1}, $\dot{\cB}'$ denotes the closure of $\cE'$ in $\cB' = \cD_{L^\infty}'$ which is not the dual space of $\dot{\cB}$ but in some sense is its analogon for distributions instead of smooth functions. For $p = \infty$, $\cD_{L^\infty}$ carries the topology $\beta(\cD_{L^\infty}, \cD'_{L^1})$ due to $(\cD'_{L^1})' = \cD_{L^\infty}$ \cite[p.~200]{Sch1}.

By definition, the topology of $\cD'_{L^q}$ is the topology of uniform convergence on the bounded sets of $\cD_{L^p}$ and $\dot\cB$ for $q>1$ and $q=1$, respectively.

In \cite{Sch4}, the spaces $\cD_{L^p, c}$ and $\cD'_{L^q, c}$ are considered{, where the index $c$ designates the topologies $\kappa(\cD_{L^p}, \cD'_{L^q})$ and $\kappa(\cD'_{L^q}, \cD_{L^p})$ of uniform convergence on \emph{compact} sets of $\cD'_{L^q}$ and $\cD_{L^p}$, respectively. Let us mention three reasons why the spaces $L^q_c$ and $\cD'_{L^q,c}$ are of interest:
\begin{enumerate}[label=(\arabic*)]
\item Let $E$ and $F$ be distribution spaces. If $E$ has the $\e$-property, a kernel distribution $K(x,y) \in \cD'_x(F_y)$ belongs to the space $E_x(F_y)$ if it does so ``scalarly'', i.e.,
  \[
    K(x,y) \in E_x(F_y) \Longleftrightarrow \forall f \in F': \langle K(x,y), f(y) \rangle \in E_x.
  \]
  The spaces $L^q_c$, $1 < q \le \infty$, and $\cD'_{L^q,c}$, $1 \le q \le \infty$, have the $\e$-property \cite[Proposition 16, p.~59]{Sch4} whereas for $1 \le p \le \infty$, $L^p$ and $\cD'_{L^p}$ do not. For $L^p$, $1\leq p<\infty$, this can be seen by checking that $K(x,k)=(k^{1/p}/(1+k^2x^2)) \in \cD'(\mathbb{R})\mathbin{\widehat{\otimes}} c_0$ satisfies $\langle K(x,k),a\rangle \in L^p$ for all $a\in \ell^1$ but is not contained in $L^p\mathbin{\widehat{\otimes}}_\varepsilon c_0$; in the case of $p = \infty$ one takes $K(x,k) = Y(x-k)$ instead. For $\cD_{L^q}'$ a similar argument with $K(x,k)=\delta(x-k)$ can be used.
\item The kernel $\delta(x-y)$ of the identity mapping $E_x \to E_y$ of a distribution space $E$ belongs to $E'_{c,y} \bine E_x$. Thus, e.g., the equation 
  \[
    \delta(x-y) = \sum_{k=0}^\infty H_k(x) H_k(y)
  \]
  where the $H_k$ denote the Hermite functions, due to G.~Arfken in~\cite[p.~529]{Arfken} and P.~Hirvonen in~\cite{Hirvonen}, is valid in the spaces $L^q_{c,y} \mathbin{\widehat\otimes_\e} L^p_x$, $\cD'_{L^q,c,y} \mathbin{\widehat \otimes_\e} \cD_{L^p,x}$ if $1 \le p < \infty$, or $\cM^1_{c,y} \mathbin{\widehat\otimes_\e} \cC_{0,x}$, see~\cite{NOAH}, or, classically, in $\cS'_y \mathbin{\widehat\otimes} \cS_x$.
\item The classical Fourier transform
  \[
    \cF \colon L^p \to L^q,\qquad 1 \le p \le 2,\ \sfrac{1}{p} + \sfrac{1}{q} = 1,
  \]
  is well-defined and continuous by the Hausdorff-Young theorem. By means of the kernel $e^{-ixy}$ of $\cF$ we can express the Hausdorff-Young theorem by
  \begin{align*}
    e^{-ixy} \in \cL_c ( L^p_x, L^q_y ) & \cong L_\e ( L^p_{c,y} , L^q_{c,x} ) = L^q_y \mathbin{\widehat{\otimes}}_\varepsilon L^q_{c,x} \\
                                        & = L^q_{c,x} \mathbin{\widehat{\otimes}}_\varepsilon L^q_y
  \end{align*}
  if $p>1$, and by $e^{-ixy} \in L^\infty_{c,x} ( \cC_{0,y} )$ if $p=1$. Obvious generalizations are
  \begin{align*}
    e^{-ixy} & \in \cD_{L^q, c, x} ( \bigcup_{k=0}^\infty (L^q)_{k,y} ), \quad 2<q<\infty,\textrm{ and }\\
    e^{-ixy} & \in \cD_{L^\infty,c,x}( \cO^0_{c,y} ).
  \end{align*}
\end{enumerate}

Whereas the topology of $\cD_{L^p}$, $1\leq p\leq \infty$, can be described either by the seminorms $p_m$ or, equivalently, by
\[
  p_B \colon \cD_{L^p} \to \bR,\quad p_B(\varphi) = \sup_{S \in B} \abso{ \langle \varphi, S \rangle },\quad B \subseteq \cD'_{L^q}\textrm{ bounded},
\]
the topology of $\cD_{L^p, c}$ only is described by
\[
  p_C \colon \cD_{L^p} \to \bR,\quad p_C(\varphi) = \sup_{S \in C} \abso{\langle \varphi, S \rangle },\quad C \subseteq \cD'_{L^q}\textrm{ compact}
\]
for $1 < p \le \infty$ and
\[
  p_C \colon \cD_{L^1} \to \bR,\quad p_C(\varphi) = \sup_{S \in C} \abso{\langle \varphi, S \rangle },\quad C \subseteq \dot{\cB}' \textrm{ compact}.
\]
An analogue statement holds for $L^p_c$, $1 < p \le \infty$ and $\cM^1_c$.

Thus, our task is the description of the topologies $\kappa(\cD_{L^p}, \cD'_{L^q})$, $\kappa(\cD_{L^1}, \dot{\cB}')$, $\kappa(L^p, L^q)$ and $\kappa(\cM^1, \cC_0)$ (for $1<p\leq \infty$) by seminorms involving functions and not sets (\Cref{prop3p1,prop3,prop8,prop9}). As a byproduct, the compact sets in $\cD'_{L^q}$ and $L^q$ for $1 \le q < \infty$ are characterised in \Cref{prop2} and in \Cref{cor8}, respectively. In addition, we also give characterisations of the compact sets of $\dot{\cB}'$ and $\cM^1$.

The notation generally adopted is the one from~\cite{Sch5, Sch4, Sch1, Sch2, Sch3} and \cite{H}. However, we deviate from these references by defining the \emph{Fourier transform} as
\[
  \cF \varphi(y) = \int_{\bR^n} e^{-iyx} \varphi(x)\,\ud x,\qquad \varphi \in \cS(\bR^n) = \cS.
\]

We follow~\cite[p.~36]{Sch1} in denoting by $Y$ the Heaviside-function. The translate of a function $f$ by a vector $h$ is denoted by $(\tau_h f)(x) \coleq f(x-h)$.} Besides the spaces $L^p(\bR^n) = L^p$, $\cD_{L^p}(\bR^n) = \cD_{L^p}$, $1 \le p \le \infty$, we use the space $\cM^1(\bR^n) = \cM^1$ of integrable measures which is the strong dual of the space $\cC_0$ of continuous functions vanishing at infinity. In measure theory the measures in~$\cM^{1}$ usually are called bounded measures whereas J.~Horv\'{a}th, in analogy to the integrable distributions, calls them integrable measures. Here we follow J.~Horv\'{a}th's naming convention.

The \emph{topologies} on Hausdorff locally convex spaces $E,F$ we use are
\begin{description}
\item[$\beta(E,F)$] -- the topology (on $E$) of uniform convergence on \emph{bounded} sets of $F$,
\item[$\kappa(E,F)$] -- the topology (on $E$) of uniform convergence on \emph{absolutely convex compact} subsets of $F$, see~\cite[p.~235]{H}.
\end{description}
Thus, if $1 < p \le \infty$ and $\sfrac{1}{p} + \sfrac{1}{q} = 1$,
\begin{align*}
  \cM^1_c &= ( \cM^1, \kappa (\cM^1, \cC_0 )), \\
  L^p_c &= (L^p, \kappa ( L^p, L^q)), \\
  \cD_{L^1,c} &= (\cD_{L^1}, \kappa ( \cD_{L^1}, \dot{\cB}' ),\\
  \cD_{L^p, c} &= ( \cD_{L^p}, \kappa ( \cD_{L^p}, \cD'_{L^q}) ). 
\end{align*}
We use the spaces
\[
  H^{s,p} = \cF^{-1} ( (1+\abso{x}^2)^{-s/2} \cF L^p ) \qquad 1 \le p \le \infty, s\in\mathbb{R},
\]
see~\cite[Def~3.6.1, p.~108]{DVAF}, \cite[7.63, p.~252]{Adams} or~\cite{Wo}. The weighted $L^p$-spaces are
\[
  L^p_k = (1 + \abso{x}^2)^{-k/2} L^p,\quad k \in \bZ.
\]

In addition, we consider the following sequence spaces. By $s$ we denote the space of rapidly decreasing sequences, by $s'$ its dual, the space of slowly increasing sequences. Moreover we consider the space $c_0$ of null sequences and its weighted variant
\[
  (c_0)_{-k} = \{ x \in \bC^{\bN} \colon \lim_{j \to \infty} j^{-k} x(j) = 0 \}.
\]

The space $s'$ is the non-strict inductive limit
\[
  s' = \varinjlim_{k} (c_0)_{-k}
\]
with compact embeddings, i.e.~an (LS)-space, see~\cite[p.~132]{FW}.

The uniquely determined temperate \emph{fundamental solution} of the iterated metaharmonic operator $(1-\Delta_n)^k$, where $\Delta_n$ is the $n$-dimensional Laplacean, is given by
\[
  L_{2k} = \frac{1}{2^{k+n/2-1} \pi^{n/2} \Gamma(k)} \abso{x}^{k-n/2} K_{n/2 - k} ( \abso{x} ) = \cF^{-1} ( (1+\abso{x}^2)^{-k} )
\]
wherein the distribution
\[
  L_s = \frac{1}{2^{\frac{s+n}{2}-1} \pi^{n/2} \Gamma(s/2)} \abso{x}^{\frac{s-n}{2}} K_{\frac{n-s}{2}} ( \abso{x} )
\]
is defined by analytic continuation with respect to $s \in \bC$. The symbol $\Pf$ in \cite[(II, 3; 20), p.~47]{Sch1} is not necessary because $L_s \in \cH(\bC_s) \mathbin{\widehat\otimes} \cS'$ is an entire holomorphic function with values in $\cS'(\bR^n_x)$ \cite[p.~47]{Sch1}. In virtue of
\[
  \cF L_s = (1 + \abso{x}^2)^{-s/2} \in \cO_{M,x}
\]
we, in fact, have $L_s \in \cH(\bC_s) \mathbin{\widehat\otimes} \cO_C'$ and $L_s * L_t = L_{s+t}$ for $s,t \in \bC$ \cite[(VI, 8; 5), p.~204]{Sch1}.

Particular cases of this formula are:
\[
  L_0 = \delta,\quad L_{-2k} = (1 - \Delta_n)^k \delta, \quad (1-\Delta_n)^k L_{2k} = \delta,\quad k \in \bN.
\]
For $s>0$, $L_s>0$ and $L_s$ decreases exponentially at infinity. Moreover, $L_s \in L^1$ for $\mathop{\mathrm{Re}} s >0$. In contrast to \cite[p.~131]{St} and \cite{Wo}, we maintain the original notation $L_s$ for the Bessel kernels and we write $L_s *$ instead of $\cJ_s$. The spaces $\cD'_{L^q}$ can be described as the inductive limit
\[
  \bigcup_{m=0}^\infty (1-\Delta_n)^m L^q = \varinjlim_m H^{-2m,q},
\]
see, e.g., \cite[p.~205]{Sch1}. The space $\dot{\cB}'$ of distributions vanishing at infinity has the similar representation
\[
  \dot{\cB}' = \bigcup_{m=0}^\infty (1-\Delta_n)^m \cC_0 = \varinjlim_m (1-\Delta_n)^m \cC_0,
\]
by~\cite[p.~205]{Sch1}. If we equip $(1-\Delta_n)^m \cC_0$ with the final topology with respect to $(1-\Delta_n)^m$ an application of de Wilde's closed graph theorem provides the topological equality since by~\cite[Proposition~7, p.~65]{Ba} $\dot{\cB}'$ is ultrabornological and $\varinjlim_m (1-\Delta_n)^m \cC_0$ has a completing web since it is a Hausdorff inductive limit of Banach spaces.

\section{``Function''-seminorms in $\cD_{L^p,c}$\,, $1 < p \le \infty$}\label{sec2}

In order to describe the topology of $\cD_{L^p,c}$ $(1 < p < \infty$) by ``function''-seminorms it is necessary to characterise the compact sets of the dual space $\cD'_{L^q}$ ($1 < q < \infty$), defined in \cite[p.~200]{Sch1} as
\[
  \cD'_{L^q} = (\cD_{L^p})',\quad \frac{1}{p} + \frac{1}{q} = 1,
\]
and endowed with the strong topology $\beta(\cD'_{L^q}, \cD_{L^p})$. The description of $\cD_{L^\infty,c}$ is already well-known \cite{DD}.

Due to the definition of the space $\cD_{L^p}$, $1 \le p < \infty$, as the countable projective limit $\bigcap_{m=0}^\infty H^{2m,p}$ of the Banach spaces $H^{2m,p}$, which are called ``potential spaces'' in \cite[p.~135]{St}, we conclude that the strong dual $\cD'_{L^q}$ coincides with the countable inductive limit $\bigcup_{m=0}^\infty H^{-2m,q}$. Note that the topological identity follows from the ultrabornologicity of $(\cD_{L^q}', \beta(\cD'_{L^q}, \cD_{L^p}))$, which follows for example by the sequence-space representation $\cD_{L^p}'\cong s'\mathbin{\widehat{\otimes}}\ell^q$ given independently by D.~Vogt in~\cite{Vogt2} and by M.~Valdivia in~\cite{Va}, by means of Grothendiecks Th\'eor\`eme B \cite[p.~17]{G}. The completeness of $\cD'_{L^q}$ implies the regularity of the inductive limit $\bigcup_{m=0}^\infty H^{-2m,q}$ \cite[p.~77]{B}.

An alternative proof of the representation of $\cD'_{L^q}$ as the inductive limit of the potential spaces above can be given using~\cite[Theorem~5]{BW} and the fact that $1-\Delta_n$ is a densely defined and invertible, closed operator on $L^q$.

We first show that the (LB)-space $\cD'_{L^q}$, $1 \le q < \infty$ is compactly regular \cite[6.~Definition (c), p.~100]{B}:

\begin{proposition}\label{prop1}
  If $1 \le q < \infty$ the (LB)-space $\cD'_{L^q}$ is compactly regular.
\end{proposition}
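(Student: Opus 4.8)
The plan is to recognise $\cD'_{L^q}$ as the strong dual of a Fréchet space and to trade compact regularity for a property of that predual. For $1<q<\infty$ the predual is $\cD_{L^p}$ with $\sfrac{1}{p}+\sfrac{1}{q}=1$, and for $q=1$ it is $\dot{\cB}$; in both cases the strong dual topology agrees with the inductive limit topology of $\varinjlim_m H^{-2m,q}$, which is exactly the ultrabornologicity/distinguishedness already recorded above. I would then invoke the standard equivalence, valid for the strong dual $E'_\beta=\varinjlim_n E'_{U_n}$ of a Fréchet space $E$: the (LB)-space $E'_\beta$ is compactly regular if and only if $E$ is quasinormable (compact regularity, bounded retractivity and Retakh's condition (M) all coincide for (LB)-spaces). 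Thus the task reduces to showing that $\cD_{L^p}$ ($1<p<\infty$) and $\dot{\cB}$ are quasinormable.

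For this I would use the sequence-space representations $\cD_{L^p}\cong s\mathbin{\widehat{\otimes}}\ell^p\cong s(\ell^p)$ and $\dot{\cB}\cong s\mathbin{\widehat{\otimes}}c_0\cong s(c_0)$, where for a Banach space $X$ the space $s(X)$ of rapidly decreasing $X$-valued sequences carries the seminorms $\norm{(x_j)_j}_k=\sup_j j^k\norm{x_j}_X$. Quasinormability of $s(X)$ is then an elementary tail estimate: given $k$, take $l=k+1$; for $\e>0$ choose $N$ with $\sfrac{1}{N+1}<\e$ and split each $(x_j)_j$ in the unit ball $U_l$ of $\norm{\cdot}_l$ into its head ($j\le N$) and tail ($j>N$). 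The heads form a bounded subset $B$ of $s(X)$, since for $r\ge l$ one has $\sup_{j\le N}j^r\norm{x_j}_X\le N^{r-l}$, while the tail lies in $\e U_k$ because $\sup_{j>N}j^k\norm{x_j}_X\le\sup_{j>N}j^{-1}<\e$. Hence $U_l\subseteq B+\e U_k$, which is precisely quasinormability.

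An equally viable, more self-contained route avoids the predual altogether: transport the representation to the dual side, $\cD'_{L^q}\cong s'\mathbin{\widehat{\otimes}}\ell^q\cong\varinjlim_k (c_0)_{-k}(\ell^q)$, and verify Retakh's condition (M) directly. Taking $U_k$ to be the unit ball of $(c_0)_{-k}(\ell^q)$, for any $m>k$ and any $l\ge m$ the estimate $\norm{(x_j)_j}_{-m}\le N^{l-m}\norm{(x_j)_j}_{-l}+N^{k-m}$ (valid on $U_k$, where $\norm{x_j}_{\ell^q}\le j^k$) shows that the norms of steps $m$ and $l$ induce the same topology on $U_k$; since on later steps the norms only decrease, this forces these topologies to coincide, and condition (M) then yields compact regularity.

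The step I expect to be the genuine obstacle is the reduction itself: correctly invoking the equivalence ``compactly regular $\Leftrightarrow$ quasinormable predual'' with the \emph{strong} dual topology, and knowing that this coincides with the (LB)-topology on $\varinjlim_m H^{-2m,q}$ (guaranteed here by ultrabornologicity). In the self-contained variant the corresponding delicate point is the commutation of the completed tensor product with the inductive limit, i.e.\ the identification $s'\mathbin{\widehat{\otimes}}\ell^q\cong\varinjlim_k (c_0)_{-k}(\ell^q)$ as an (LB)-space. Once either of these is in place, the remaining estimates are routine.
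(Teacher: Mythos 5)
Your proposal is correct, but your primary route is genuinely different from the paper's own proof, while your ``self-contained variant'' essentially reproduces a different part of the paper. The paper's proof of \Cref{prop1} works directly with the steps $H^{-2m,q}$: it quotes the characterisation of compactly regular (LF)-spaces by condition (Q) and verifies (Q) by translating the required estimate into Bessel-potential form, where it becomes Ehrling's inequality $\norm{\cJ_s\varphi}_q\le\e\norm{\varphi}_q+C\norm{\cJ_t\varphi}_q$; the essential input is thus a concrete analytic inequality in the potential spaces, and no sequence-space representation is needed. Your first route trades this analytic input for structural duality theory: Grothendieck's theorem that a metrizable space is quasinormable iff its strong dual satisfies the strict Mackey convergence condition, the Neus--Wengenroth equivalence of bounded retractivity, compact regularity and condition (M) for (LB)-spaces, and the Valdivia--Vogt representations $\cD_{L^p}\cong s\mathbin{\widehat{\otimes}}\ell^p\cong s(\ell^p)$ and $\dot{\cB}\cong s\mathbin{\widehat{\otimes}}c_0\cong s(c_0)$, after which quasinormability is your elementary head/tail estimate. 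What this buys is a proof with no hard analysis at all and a statement (quasinormability of $s(X)$ for arbitrary Banach $X$) of independent use; what it costs is the reliance on the representations of Valdivia and Vogt plus the duality machinery, and the bookkeeping you correctly flag: one needs the topological identity of the strong dual with $\varinjlim_m H^{-2m,q}$ (recorded in the paper via ultrabornologicity), and then Grothendieck's factorization theorem to see that compact regularity transfers between the two (LB)-structures on the same space. Your second route, by contrast, is not new relative to the paper: it is precisely the alternative proof announced in \Cref{rem1}~\ref{rem1a} and carried out in Section~\ref{sec4} (\Cref{prop7}), where the identification $s'\mathbin{\widehat{\otimes}}\ell^q\cong\varinjlim_k(c_0(\ell^q))_{-k}$ is taken from Hollstein and compact regularity is checked via condition (Q); your verification of Retakh's condition (M) is the same tail estimate in a slightly different guise.
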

\begin{proof}
  Compactly regular (LF)-spaces are characterised by condition (Q) (\cite[Thm.~2.7, p.~252]{W2}, \cite[Thm.~6.4, p.~112]{W1}) which in our case reads as:
  \begin{gather*}
    \forall m \in \bN_0\ \exists k >m\ \forall \e>0\ \forall \ell>k\ \exists C>0:\\
    \norm{S}_{2k, q} \le \e \norm{S}_{2m, q} + C \norm{S}_{2\ell, q}\qquad \forall S \in H^{-2m, q}.
  \end{gather*}
  (Note that $H^{-2m, q} \hookrightarrow H^{-2k, q} \hookrightarrow H^{-2\ell, q}$.)
  For (Q) see \cite[Prop.~2.3, p.~62]{V}. By definition,
  \[
    \norm{L_{2k} * S}_q \le \e \norm{L_{2m} * S}_q + C \norm{L_{2\ell} * S}_q\quad \forall S \in H^{-2m, q}
  \]
  is equivalent to
  \[
    \norm{L_{2(k-\ell)} * S}_q \le \e \norm{L_{2(m-\ell)} * S}_q + C \norm{S}_q\quad \forall S \in H^{-2(m-\ell), q}
  \]
  But this inequality follows from Ehrling's inequality \cite{Wo}, which states that for $1 \le q < \infty$ and $0 < s < t$,
  \begin{align*}
    \forall \e>0\ \exists C>0:\ \norm{\cJ_s \varphi}_q \le \e \norm{\varphi}_q + C \norm{\cJ_t \varphi}_q, \varphi \in \cS.
  \end{align*}
  By density of $\cS$ in $H^{-2(m-\ell),q}$ this implies the validity of (Q).
\end{proof}

\begin{remarks}\label{rem1}
  \begin{enumerate}[label=(\alph*)]
  \item\label{rem1a} By means of M.~Valdivia's and D.~Vogt's sequence space representation $\cD_{L^p} \cong s \mathbin{\widehat\otimes} \ell^p$ given in~\cite[Thm.~1, p.~766]{Va}, and~\cite[(3.2) Theorem, p.~415]{Vogt2} and $\dot{\cB}\cong c_0\mathbin{\widehat{\otimes}}s$, we obtain by \cite[Chapter II, Thm.~XII, p.~76]{G} that $\cD'_{L^q}\cong \ell^q\mathbin{\widehat{\otimes}}s'$, $1 \le q \le \infty$. Using this representation, a further proof of the compact regularity of the (LB)-space $\cD'_{L^q}$ is given in Section \ref{sec4}.
  \item Differently, the compact regularity of the (LB)-space $\cD'_{L^1}$ is proven in \cite[(3.6) Prop., p.~71]{DD}.
  \item If $q=2$, the space $\cD'_{L^2}$ is isomorphic to the (LB)-space $\bigcup_{k=0}^\infty (L^2)_{-k}$. The compact regularity of the (LB)-spaces
    \[ \bigcup_{k=0}^\infty (L^p)_{-k},\qquad 1 \le p \le \infty, \]
    immediately follows from the validity of condition (Q). For $p=1$ the compact regularity of the space $\bigcup_{k=0}^\infty (L^1)_{-k}$ was shown in \cite[(3.8), Satz (a), (b), p.~28; (3.9) Bem., (a), p.~29]{D}.
  \end{enumerate}
\end{remarks}

The next proposition characterises compact sets in $\cD'_{L^q}$.

\begin{proposition}\label{prop2} Let $1 \le q < \infty$. A set $C \subseteq \cD'_{L^q}$ is compact if and only if for some $m \in \bN_0$, $L_{2m} * C$ is compact in $L^q$.
\end{proposition}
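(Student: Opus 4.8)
The plan is to reduce both directions to the single map given by convolution with $L_{2m}$. By the definition of the potential space $H^{-2m,q}$, whose norm is $\norm{S}_{-2m,q} = \norm{L_{2m}*S}_q$, the map $L_{2m}* \colon H^{-2m,q} \to L^q$ is an isometric isomorphism; its inverse is $(1-\Delta_n)^m$, since $(1-\Delta_n)^m L_{2m} = \delta$. Furthermore, the canonical embedding $H^{-2m,q} \hookrightarrow \cD'_{L^q} = \varinjlim_m H^{-2m,q}$ is continuous. I would establish the equivalence by transporting compactness back and forth along these two maps.

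For the sufficiency $(\Leftarrow)$, assume that $L_{2m}*C$ is compact in $L^q$ for some $m$. Then $C = (1-\Delta_n)^m(L_{2m}*C)$ is the image of a compact set under the continuous map $(1-\Delta_n)^m \colon L^q \to H^{-2m,q}$, hence compact in the Banach space $H^{-2m,q}$; composing with the continuous embedding into $\cD'_{L^q}$ shows that $C$ is compact in $\cD'_{L^q}$. This direction is routine.

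For the necessity $(\Rightarrow)$, assume that $C$ is compact in $\cD'_{L^q}$. Here I would invoke \Cref{prop1}: compact regularity means precisely that every compact subset of the inductive limit $\varinjlim_m H^{-2m,q}$ is already contained in, and compact in, one of the Banach steps $H^{-2m,q}$. Choosing such an $m$ and applying the isometry $L_{2m}*$ gives that $L_{2m}*C$ is compact in $L^q$, as required.

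Thus the entire substance of the statement sits in the necessity and has essentially already been discharged in \Cref{prop1}: the real obstacle is ruling out that a compact set ``spreads out'' across infinitely many steps of the inductive limit rather than being captured by a single $H^{-2m,q}$, which is exactly what condition (Q) buys us. Once this is granted, transferring the conclusion to $L^q$ through the isometric isomorphism is immediate. The only minor point to verify is that compact regularity yields compactness, and not merely relative compactness, in the step $H^{-2m,q}$; but this is automatic, since $C$ is closed in $\cD'_{L^q}$ (being compact) and therefore closed in $H^{-2m,q}$ by continuity of the embedding, so relative compactness there upgrades to genuine compactness.
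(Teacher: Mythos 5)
Your proof is correct and follows essentially the same route as the paper's: the sufficiency is handled by transporting compactness through continuous maps (the paper writes $C = L_{-2m}*(L_{2m}*C)$ inside $\cD'_{L^q}$, you route through the isometry $H^{-2m,q}\cong L^q$, which amounts to the same thing), and the necessity invokes the compact regularity from \Cref{prop1} followed by continuity of $L_{2m}*\colon H^{-2m,q}\to L^q$. Your closing remark upgrading relative compactness to compactness in the step is a fine point the paper leaves implicit, and your justification of it is correct.
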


\begin{proof}
  ``$\Leftarrow$'': The compactness of $L_{2m} * C$ in $L^q$ implies its compactness in $\cD'_{L^q}$ and, hence, $C = L_{-2m} * (L_{2m} * C)$ is compact in $\cD'_{L^q}$.

  ``$\Rightarrow$'': In virtue of Proposition \ref{prop1} there is $m \in \bN_0$ such that $C$ is compact in $H^{-2m, q}$. The continuity of the mapping $\varphi \mapsto L_{2m} *\varphi$, $H^{-2m, q} \to L^q$ implies the compactness of $L_{2m} * C$ in $L^q$.
\end{proof}

The following proposition generalizes the description of the topology of the space $\cB_c = (\cD_{L^\infty}, \kappa ( \cD_{L^\infty}, \cD'_{L^1}))$ by the ``function''-seminorms
\[ p_{g,m}(\varphi) = \sup_{\abso{\alpha} \le m} \norm{g \pd^\alpha \varphi}_\infty,\quad g \in \cC_{0},\ m \in \bN_0 \]
for $\varphi \in \cB = \cD_{L^\infty}$ in \cite[(3.5) Cor., p.~71]{DD}. 

\begin{proposition}\label{prop3} Let $1 < p \le \infty$ and $\sfrac{1}{p} + \sfrac{1}{q} = 1$. The topology $\kappa(\cD_{L^p}, \cD'_{L^q})$ of $\cD_{L^p, c}$ is generated by the seminorms
  \[
    \cD_{L^p} \to \bR_+,\quad \varphi \mapsto p_{g,m}(\varphi) \coleq \norm{ g ( 1 - \Delta_n)^m \varphi}_p,\quad g \in \cC_{0}, m \in \bN_0,
  \]
  or equivalently by
  \[
    \varphi \mapsto \sup_{\abso{\alpha} \le m} \norm{g \pd^\alpha \varphi}_p,\quad g \in \cC_{0}, m \in \bN_0.
  \]
\end{proposition}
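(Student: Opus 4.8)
The plan is to show that \emph{both} seminorm families generate precisely the topology $\kappa(\cD_{L^p},\cD'_{L^q})$, which by definition is generated by the seminorms $p_C(\varphi)=\sup_{S\in C}\abso{\langle\varphi,S\rangle}$ with $C\subseteq\cD'_{L^q}$ absolutely convex and compact. The bridge between the two sides is \Cref{prop2}: every such $C$ satisfies $L_{2m}*C=:K$ compact in $L^q$ for some $m\in\bN_0$, and conversely. Since $(1-\Delta_n)^m$ is its own transpose and $(1-\Delta_n)^m L_{2m}=\delta$, each $S\in C$ can be written $S=(1-\Delta_n)^m f$ with $f=L_{2m}*S\in K$, so that
\[
  \langle\varphi,S\rangle=\langle(1-\Delta_n)^m\varphi,f\rangle=\int (1-\Delta_n)^m\varphi\cdot f\,\ud x .
\]
All estimates then reduce to Hölder's inequality in the duality $L^p=(L^q)'$, which is valid because $1<p\le\infty$ and $1\le q<\infty$.

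For the inequality $p_C\le M\,p_{g,m}$, which shows the function-seminorms are at least as fine as $\kappa$, the key auxiliary fact is a weight lemma: if $K\subseteq L^q$ is compact then there is $g\in\cC_0$ with $g>0$ and $\sup_{f\in K}\norm{f/g}_q<\infty$. I would derive this from the tightness of $K$ (part of the Riesz--Kolmogorov criterion): choosing radii $R_j\uparrow\infty$ with $\sup_{f\in K}\int_{\abso{x}>R_j}\abso{f}^q\le 4^{-j}$, the weight $g^{-q}$ can be taken constant on a central ball and growing like $2^j$ on the $j$-th annulus, so that $\sup_{f\in K}\int\abso{f/g}^q\le\sum_j 2^j4^{-j}<\infty$; smoothing makes $g\in\cC_0$. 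Writing $\int(1-\Delta_n)^m\varphi\cdot f\,\ud x=\int\bigl(g(1-\Delta_n)^m\varphi\bigr)(f/g)\,\ud x$ and applying Hölder then yields $\abso{\langle\varphi,S\rangle}\le M\,p_{g,m}(\varphi)$ uniformly in $S\in C$.

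For the reverse inequality I would start from the dual-norm identity
\[
  p_{g,m}(\varphi)=\norm{g(1-\Delta_n)^m\varphi}_p=\sup_{\norm{f}_q\le1}\abso{\langle\varphi,(1-\Delta_n)^m(gf)\rangle},
\]
so that $p_{g,m}=p_A$ with $A=(1-\Delta_n)^m(gB)$, where $B$ is the unit ball of $L^q$. It then suffices to show that $A$ is relatively compact in $\cD'_{L^q}$: by the compact regularity established in \Cref{prop1}, $A$ would then be relatively compact in some Banach step $H^{-2k,q}$, where its closed absolutely convex hull $C$ is compact, hence compact in $\cD'_{L^q}$, and $p_{g,m}=p_A\le p_C$. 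By \Cref{prop2} it is enough that $L_{2(m+1)}*A=\{L_2*(gf):\norm{f}_q\le1\}$ be relatively compact in $L^q$, that is, that the operator $T\colon f\mapsto L_2*(gf)$ be compact on $L^q$. This I expect to be the main obstacle. I would handle it by first approximating $g$ uniformly by compactly supported functions, so that $T$ becomes an operator-norm limit of the corresponding operators and the problem reduces to $g\in\mathscr{C}_c$; for such $g$ the three Riesz--Kolmogorov conditions for $T(B)$ are checked directly: boundedness is clear; equicontinuity follows from $\norm{(\tau_hL_2-L_2)*(gf)}_q\le\norm{\tau_hL_2-L_2}_1\norm{g}_\infty\to0$, using $L_2\in L^1$ and continuity of translation in $L^1$; and tightness follows from the exponential decay of $L_2$ together with the compact support of $gf$. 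That the composition of a multiplication operator localising in $x$ with a convolution localising in frequency is compact is exactly what makes this step work.

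Finally, the equivalence with the family $\varphi\mapsto\sup_{\abso{\alpha}\le m}\norm{g\pd^\alpha\varphi}_p$ is obtained by running the same two arguments for it. Expanding $(1-\Delta_n)^m=\sum_{\abso{\alpha}\le m}c_\alpha\pd^{2\alpha}$ gives $p_C\le M'\sup_{\abso{\beta}\le 2m}\norm{g\pd^\beta\varphi}_p$, while $\norm{g\pd^\alpha\varphi}_p=\sup_{\norm{f}_q\le1}\abso{\langle\varphi,\pd^\alpha(gf)\rangle}$ together with the compactness of $f\mapsto(\pd^\alpha L_{2m'})*(gf)$ (the same Riesz--Kolmogorov argument, now using that $\pd^\alpha L_{2m'}\in L^1$ with exponential decay once $2m'>\abso{\alpha}+n$) shows each such seminorm is $\kappa$-continuous. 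Hence both families generate $\kappa(\cD_{L^p},\cD'_{L^q})$ and are therefore mutually equivalent.
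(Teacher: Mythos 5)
Your proposal is correct, and its overall skeleton matches the paper's: identify each seminorm $p_{g,m}$ as the seminorm of uniform convergence on the set $(1-\Delta_n)^m(gB_{1,q})$, use \Cref{prop2} to pass between compactness in $\cD'_{L^q}$ and in $L^q$, and verify $L^q$-compactness by the Riesz--Kolmogorov criterion. However, you replace both of the paper's key ingredients by more elementary ones. First, where the paper obtains $K\subseteq gB_{1,q}$ for compact $K\subseteq L^q$ (\Cref{lem1}) by invoking the Cohen--Hewitt factorization theorem for Banach modules, your dyadic weight construction from tightness radii proves the same statement by hand (your bound is $\sup_{f\in K}\norm{f/g}_q\le M$ with the central-ball term controlled by boundedness of $K$; the constant $M$ is harmless since it can be absorbed by replacing $g$ with $Mg\in\cC_0$), at the cost of no external machinery. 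Second, where the paper verifies smallness at infinity of $L_{2(\ell-m)}*(gB_{1,q})$ for arbitrary $g\in\cC_0$ by a two-term splitting of the kernel, you first reduce to compactly supported $g$ via the operator-norm estimate $\norm{T-T_n}\le\norm{L_2}_1\norm{g-g_n}_\infty$ and norm-closedness of the compact operators, after which the tail estimate is immediate. Your version also covers slightly more ground: it treats $p=\infty$ on the same footing as $1<p<\infty$ (the paper quotes the result of P.~and S.~Dierolf for that case), and it gives an explicit argument for the equivalence with the family $\sup_{\abso{\alpha}\le m}\norm{g\pd^\alpha\varphi}_p$, which the paper asserts without proof; there your sufficient condition $2m'>\abso{\alpha}+n$ for $\pd^\alpha L_{2m'}\in L^1$ with exponential decay is indeed sufficient (integrability already holds for $2m'>\abso{\alpha}$). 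Both approaches deliver the same statement; the paper's is shorter given the cited factorization theorem, while yours is self-contained.
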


\begin{proof}Due to \cite[(3.5) Cor., p.~71]{DD} it suffices to assume $1 < p < \infty$. We denote the topology on $\cD_{L^p}$ generated by $\{ p_{g,m}:g \in \cC_{0},m\in\bN_0\}$ by $t$. Moreover, $B_{1,p}$ shall denote the unit ball in $L^p$.
  
  (a) $t \subseteq \kappa ( \cD_{L^p}, \cD'_{L^q} )$:

  If $\cU_{g,m} \coleq \{ \varphi \in \cD_{L^p}: p_{g,m}(\varphi) \le 1 \}$ is a neighborhood of $0$ in $t$ we have $\cU_{g,m} = ( ( \cU_{g,m})^\circ )^\circ$ by the theorem on bipolars. We show that $\cU^\circ_{g,m}$ is a compact set in $\cD'_{L^q}$. We have
  \begin{align*}
    \varphi \in \cU_{g,m} &\Longleftrightarrow g ( L_{-2m} * \varphi) \in B_{1,p} \\
                          & \Longleftrightarrow \sup_{\psi \in B_{1,q}} \abso{ \langle \psi, g ( L_{-2m} * \varphi ) \rangle } \le 1 \\
                          & \Longleftrightarrow \sup_{\psi \in B_{1,q}} \abso{ \langle L_{-2m} * (g \psi), \varphi \rangle } \le 1 \\
                          & \Longleftrightarrow \varphi \in (L_{-2m} * (g B_{1,q}))^\circ.
  \end{align*}
  Hence, $\cU^\circ_{g,m} = \overline{L_{-2m} * (g B_{1,q} )} \subseteq \cD'_{L^q}$. By Proposition \ref{prop2}, $\cU^\circ_{g,m}$ is compact in $\cD'_{L^q}$ if there exists $\ell \in \bN_0$ such that
  \[ L_{2 \ell} * \cU_{g,m}^\circ = \overline{L_{2(\ell-m)} * (g B_{1,q})} \]
  is compact in $L^q$. Choosing any $\ell>m$, it suffices to show that $C \coleq L_{2(\ell - m)} * (g B_{1,q})$ satisfies the three criteria of the M.~Fr\'echet-M.~Riesz-A.~Kolmogorov-H.~Weyl Theorem \cite[Thm.~6.4.12, p.~140]{Sch3}:
  \begin{enumerate}[label=(\roman*)]
  \item Because $\ell>m$, $\mu \coleq L_{2(\ell-m)} \in L^1$ and hence, for $\varphi \in B_{1,q}$,
    \[ \norm{ \mu * (g \varphi)}_q \le \norm{ \mu }_1 \norm{g}_{\infty}, \]
    i.e., $C$ is bounded in $L^q$.
  \item The set $C$ has to be small at infinity: for $\varphi \in B_{1,q}$, 
    \begin{align*}
      \|Y ( \abso{.} - R)&(\mu * (g \varphi ) )\|_q \le\\
                         &\le \norm{ Y ( \abso{.} - R) \left( ( Y ( \frac{R}{2} - \abso{.})\mu)*(g \varphi) \right)}_q\\
                         & \qquad + \norm{ Y ( \abso{.} - R) \left( ( Y ( \abso{.} - \frac{R}{2})\mu)*(g \varphi) \right)}_q \\
                         &\le \left( \int_{\abso{x} \ge R} \left| \int_{\abso{\xi} \le R/2} \mu(\xi) (g \varphi)(x-\xi)\,\ud \xi\right|^q\,\ud x \right)^{1/q}\\
                         & \qquad + \norm{ ( Y ( \abso{.} - R/2)\mu)*(g \varphi)}_q \\
                         &\le \int_{\abso{\xi} \le R/2} \left( \int_{\abso{x} \ge R} \abso{ \mu(\xi) (g \varphi)(x-\xi)}^q\,\ud x\right)^{1/q}\,\ud \xi\\
                         & \qquad + \norm{ Y ( \abso{.} - R/2 )\mu}_1 \cdot \norm{g}_\infty \\
                         &\le \int_{\abso{\xi} \le R/2} \abso{ \mu(\xi) } \,\ud \xi \cdot \left( \int_{\abso{z} \ge R/2} \abso{ (g \varphi)(z)}^q \,\ud z\right)^{1/q}\\
                         & \qquad + \norm{ Y ( \abso{.} - R/2)\mu}_1 \cdot \norm{g}_\infty \\
                         &\le \norm{\mu}_1 \norm{ Y ( \abso{.} - R/2) g }_\infty\\
                         & \qquad + \norm{Y(\abso{.} - R/2) \mu}_1 \cdot \norm{g}_\infty.
    \end{align*}
    Hence, $\lim_{R \to \infty} \norm{ Y ( \abso{.} - R)( \mu * (g \varphi ))}_q = 0$ uniformly for $\varphi \in B_{1,q}$.
  \item $C$ is $L^q$-equicontinuous because
    \begin{gather*}
      \norm{\tau_h ( \mu * (g\varphi)) - \mu * (g\varphi) }_q \le \norm{\tau_h \mu - \mu }_1 \norm{g}_\infty
    \end{gather*}
    tends to $0$ if $h \to 0$, uniformly for $\varphi \in B_{1,q}$.
  \end{enumerate}
  (b) $\kappa(\cD_{L^p}, \cD'_{L^q}) \subset t$:

  If $C^\circ$ is a $\kappa(\cD_{L^p}, \cD'_{L^q})$-neighborhood of $0$ with $C$ a compact set in $\cD'_{L^q}$ then, by \Cref{prop2}, there exists $m \in \bN_0$ such that the set $L_{2m} * C$ is compact in $L^q$. By means of Lemma \ref{lem1} below there is a function $g \in \cC_{0}$ such that $L_{2m} * C \subseteq g B_{1,q}$, hence $C \subseteq L_{-2m} * (g B_{1,q}) \subseteq \cU^\circ_{g,m}$. Thus, $C^\circ \supseteq \cU_{g,m}$, i.e., $C^\circ$ is a neighborhood in $t$.
\end{proof}

\begin{lemma}\label{lem1}Let $1 \le q < \infty$. 
  If $K \subseteq L^q$ is compact then there exists $g \in \cC_{0}$ such that $K \subseteq g B_{1,q}$.
\end{lemma}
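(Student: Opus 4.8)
The plan is to produce a single continuous, strictly positive weight $g$ that tends to $0$ at infinity and for which
\[
  \sup_{f \in K} \int_{\bR^n} \frac{\abso{f(x)}^q}{g(x)^q}\,\ud x < \infty,
\]
and then to rescale. The reason this suffices is that $K \subseteq g B_{1,q}$ means precisely that every $f \in K$ factors as $f = g \psi$ with $\psi \coleq f/g$ and $\norm{\psi}_q \le 1$; since I will arrange $g > 0$ everywhere, this factorisation is literally valid pointwise almost everywhere, and $\psi \in L^q$ is equivalent to finiteness of the above integral for that $f$. Thus, once the supremum is bounded by some constant $C$, I would replace $g$ by $C^{1/q} g$ — which again lies in $\cC_0$ — to obtain $\norm{f/(C^{1/q} g)}_q \le 1$ for all $f \in K$, i.e.\ $K \subseteq (C^{1/q} g) B_{1,q}$.

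Next I would extract the two quantitative features of $K$ that the construction needs. By the M.~Fréchet--M.~Riesz--A.~Kolmogorov--H.~Weyl theorem \cite[Thm.~6.4.12, p.~140]{Sch3}, a compact $K \subseteq L^q$ is bounded, say $M \coleq \sup_{f\in K}\norm{f}_q < \infty$, and uniformly small at infinity, i.e.
\[
  \beta(R) \coleq \sup_{f\in K}\int_{\abso{x}\ge R}\abso{f(x)}^q\,\ud x \longrightarrow 0 \quad (R \to \infty).
\]
(The translation-equicontinuity furnished by the same theorem will not be needed.) Using $\beta(R) \to 0$, I would pick radii $0 = R_0 < R_1 < R_2 < \cdots \to \infty$ with $\beta(R_j) \le 4^{-j}$ for $j \ge 1$, and set $A_j \coleq \{x : R_{j-1} \le \abso{x} < R_j\}$.

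Then I would build $g$ radially as a continuous, strictly positive, radially non-increasing function interpolating the constants $c_j \coleq 2^{-j/q}$ across the annuli, so that $c_{j+1} \le g \le c_j$ on $A_j$; this $g$ lies in $\cC_0$ because $c_j \to 0$. On $A_j$ one has $g \ge c_{j+1}$, whence for every $f \in K$
\[
  \int_{A_j} \frac{\abso{f}^q}{g^q}\,\ud x \le c_{j+1}^{-q}\int_{\abso{x}\ge R_{j-1}}\abso{f}^q\,\ud x \le c_{j+1}^{-q}\,\beta(R_{j-1}).
\]
Summing over $j$ and using $c_{j+1}^{-q} = 2^{\,j+1}$ together with $\beta(R_{j-1}) \le 4^{-(j-1)}$ for $j \ge 2$ (and $\beta(R_0) = M^q$ for $j=1$) gives
\[
  \int_{\bR^n}\frac{\abso{f}^q}{g^q}\,\ud x = \sum_{j\ge 1}\int_{A_j}\frac{\abso{f}^q}{g^q}\,\ud x \le 4 M^q + \sum_{j\ge 2} 2^{\,j+1} 4^{-(j-1)} \eqcol C < \infty,
\]
a bound independent of $f \in K$. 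Rescaling $g$ by $C^{1/q}$ as in the first paragraph would finish the argument.

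I do not expect a genuine obstacle here: the essential input is merely the \emph{uniform smallness at infinity} of $K$, while boundedness controls the fixed central region (where $g$ is bounded below by a positive constant), so no local equi-integrability hypothesis enters. The only points requiring care are that $g$ be strictly positive and continuous — so that the factorisation $f = g\,(f/g)$ holds verbatim and $g \in \cC_0$ — and the harmless one-index shift in the lower bound $g \ge c_{j+1}$ on $A_j$ caused by the interpolation, which does not affect summability.
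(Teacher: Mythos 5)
Your proof is correct, but it takes a genuinely different route from the paper. The paper disposes of the lemma in a single line: it applies the Cohen--Hewitt factorization theorem \cite[(17.1), p.~114]{DW} to $K$ viewed as a compact subset of the Banach module $L^q$ over the Banach algebra $\cC_0$, which has the approximate identity $\{e^{-k^2\abso{x}^2}:k>0\}$. You instead construct the factor $g$ by hand: compactness gives boundedness and uniform smallness at infinity, and your dyadic annular weight (interpolating $c_j = 2^{-j/q}$ on the annuli $A_j$, with $\beta(R_j)\le 4^{-j}$) yields $\sup_{f\in K}\norm{f/g}_q^q \le 4M^q + \sum_{j\ge 2}2^{j+1}4^{-(j-1)} < \infty$, after which rescaling $g$ by a constant finishes. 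The estimates check out: the summand is $2^{3-j}$, the interpolated $g$ is continuous, strictly positive, radially non-increasing and in $\cC_0$, and strict positivity makes the pointwise factorisation $f = g\,(f/g)$ legitimate. What each approach buys: the paper's argument is short and situates the lemma in a general module-factorization framework, which pays off again in the proof of \Cref{prop8}, where the same theorem is applied twice (once for $\cC_0$ acting by multiplication, once for $L^1$ acting by convolution); the cost is reliance on a deep abstract theorem. Your argument is elementary and self-contained, uses only tightness and boundedness (not the translation equicontinuity that full compactness also provides), and therefore actually proves a stronger statement: any bounded subset of $L^q$ that is uniformly small at infinity -- compact or not -- is contained in $g B_{1,q}$ for some $g\in\cC_0$. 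On the other hand, your construction is specific to the multiplication module structure of $L^q$ over $\cC_0$ and would not transfer as directly to the convolution factorization needed later in the paper.
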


\begin{proof}
  Apply the Cohen-Hewitt factorization theorem \cite[(17.1), p.~114]{DW} to the bounded subset $K$ of the (left) Banach module $L^q$ with respect to the Banach algebra $\cC_0$ having (left) approximate identity $\{ e^{-k^2 \abso{x}^2 } : k >0 \}$.
\end{proof}

\begin{remark}
  Denoting by $\tau(\cB, \cD'_{L^1})$ the Mackey-topology on $\cB = \cD_{L^\infty}$ we even have for $p = \infty$, $\cB = \cD_{L^\infty}$:
  \[
    \cB_c = (\cB, \kappa ( \cB, \cD'_{L^1} ) ) = (\cB, t) = ( \cB, \tau ( \cB, \cD'_{L^1} ) ),
  \]
  since $\cD'_{L^1}$ is a Schur space \cite[p.~52]{DD}.
\end{remark}

\section{The case $p=1$}

Using the sequence-space representation $\dot{\cB}'\cong s'\mathbin{\widehat{\otimes}}c_0$ given in~\cite[Theorem~3, p.~13]{Ba}, the compact regularity of the (LB)-space $\dot{\cB}'$ can be shown similarly to the proof of \Cref{prop6}. Moreover, one has the following characterisation of the compact sets of $\dot{\cB}'$.

\begin{proposition}\label{prop2p1} A set $C \subseteq \dot{\cB}'$ is compact if and only if for some $m \in \bN_0$, $L_{2m} * C$ is compact in $\cC_0$.
\end{proposition}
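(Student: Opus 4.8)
The plan is to imitate the proof of \Cref{prop2} as closely as possible, replacing the role of $L^q$ by $\cC_0$ and the inductive-limit representation $\cD'_{L^q} = \varinjlim_m H^{-2m,q}$ by the analogous representation $\dot{\cB}' = \varinjlim_m (1-\Delta_n)^m \cC_0$ recorded in the introduction. The two ingredients I expect to need are: first, the compact regularity of the (LB)-space $\dot{\cB}'$, which the paragraph preceding the statement asserts can be proved from the sequence-space representation $\dot{\cB}' \cong s' \mathbin{\widehat{\otimes}} c_0$ exactly as in \Cref{prop6}; and second, the continuity of the convolution map $\varphi \mapsto L_{2m} * \varphi$ from the step $(1-\Delta_n)^m \cC_0$ of the inductive limit into $\cC_0$. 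Both of these I would simply invoke.

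For the implication ``$\Leftarrow$'', suppose $L_{2m} * C$ is compact in $\cC_0$ for some $m$. Since the inclusion $\cC_0 \hookrightarrow \dot{\cB}'$ is continuous, $L_{2m} * C$ is compact in $\dot{\cB}'$, and then applying the (continuous) convolution operator $L_{-2m}* = (1-\Delta_n)^m$ on $\dot{\cB}'$ gives that $C = L_{-2m} * (L_{2m} * C)$ is compact in $\dot{\cB}'$. This direction is purely formal and mirrors the first half of the proof of \Cref{prop2} verbatim.

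For the implication ``$\Rightarrow$'', let $C \subseteq \dot{\cB}'$ be compact. By the compact regularity of $\dot{\cB}' = \varinjlim_m (1-\Delta_n)^m \cC_0$ there is some $m \in \bN_0$ for which $C$ is contained and compact in the Banach-space step $(1-\Delta_n)^m \cC_0$, equipped with its own norm. The map $\varphi \mapsto L_{2m} * \varphi$ is, by construction of the weighted step, a topological isomorphism of $(1-\Delta_n)^m \cC_0$ onto $\cC_0$ (it is precisely the inverse of $(1-\Delta_n)^m$), and in particular continuous; hence it carries the compact set $C$ to the compact set $L_{2m} * C$ in $\cC_0$, which is what we want.

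The only genuine obstacle is the input of compact regularity for $\dot{\cB}'$, and this is exactly the delicate point deferred in the text. Concretely, one must verify condition (Q) for the inductive spectrum $(1-\Delta_n)^m \cC_0 \hookrightarrow (1-\Delta_n)^k \cC_0 \hookrightarrow (1-\Delta_n)^\ell \cC_0$, i.e.\ an estimate of the form $\norm{L_{2k}*S}_{\infty} \le \e \norm{L_{2m}*S}_\infty + C \norm{L_{2\ell}*S}_\infty$, which after the substitution used in \Cref{prop1} reduces to an Ehrling-type inequality $\norm{\cJ_s \varphi}_\infty \le \e \norm{\varphi}_\infty + C\norm{\cJ_t \varphi}_\infty$ for the sup-norm on (a dense subspace of) $\cC_0$. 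Since \Cref{prop1} only invokes Ehrling's inequality in the regime $1 \le q < \infty$, the endpoint $q=\infty$ needs the separate argument via the representation $\dot{\cB}' \cong s' \mathbin{\widehat{\otimes}} c_0$ alluded to before the statement; with that in hand the remainder of the proof is routine.
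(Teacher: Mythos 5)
Your proposal is correct and follows the paper's own route: the paper simply declares the proof to be ``completely analogous'' to that of \Cref{prop2}, and your two implications (continuous inclusion $\cC_0 \hookrightarrow \dot{\cB}'$ plus continuity of $(1-\Delta_n)^m$ on $\dot{\cB}'$ for ``$\Leftarrow$''; compact regularity of $\dot{\cB}'$ plus the isomorphism $L_{2m}*\colon (1-\Delta_n)^m\cC_0 \to \cC_0$ for ``$\Rightarrow$'') are exactly that analogy spelled out. Your closing observation---that compact regularity of $\dot{\cB}'$ is not covered by the Ehrling-type argument of \Cref{prop1} and must instead come from the sequence-space representation $\dot{\cB}' \cong s' \mathbin{\widehat{\otimes}} c_0$---also matches the paper, which defers precisely this point to the discussion preceding the statement.
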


\begin{proof}
  The proof is completely analogous to the one of \Cref{prop2}.
\end{proof}

\begin{proposition}\label{prop3p1} The topology $\kappa(\cD_{L^1}, \dot{\cB}')$ of $\cD_{L^1, c}$ is generated by the seminorms
  \[
    \cD_{L^1} \to \bR_+,\quad \varphi \mapsto p_{g,m}(\varphi) \coleq \norm{ g ( 1 - \Delta_n)^m \varphi}_1,\quad g \in \cC_{0}, m \in \bN_0,
  \]
  or equivalently by
  \[
    \varphi \mapsto \sup_{\abso{\alpha} \le m} \norm{g \pd^\alpha \varphi}_1,\quad g \in \cC_{0}, m \in \bN_0.
  \]
\end{proposition}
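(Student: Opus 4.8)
The plan is to follow the proof of \Cref{prop3} almost verbatim, substituting the Banach space $\cC_0$ for $L^q$, the predual $\dot{\cB}'$ for $\cD'_{L^q}$, \Cref{prop2p1} for \Cref{prop2}, and the Arzel\`a--Ascoli criterion for relative compactness in $\cC_0$ for the Fr\'echet--Riesz--Kolmogorov theorem. Write $t$ for the topology on $\cD_{L^1}$ generated by $\{p_{g,m} : g \in \cC_{0},\, m \in \bN_0\}$ and put $B_0 \coleq \{\psi \in \cC_{0} : \norm{\psi}_\infty \le 1\}$. The one genuinely new ingredient is the duality formula $\norm{f}_1 = \sup_{\psi \in B_0}\abso{\langle \psi, f\rangle}$ for $f \in L^1$: although $(L^1)' = L^\infty$, the function $\overline{f}/\abso{f}$ can be approximated pointwise a.e.\ by elements of $B_0$, and dominated convergence yields the supremum. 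It is precisely this replacement of $L^\infty$ by $\cC_{0}$ that keeps the whole computation inside $\dot{\cB}'$ rather than $\cD'_{L^\infty}$.

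For the inclusion $t \subseteq \kappa(\cD_{L^1}, \dot{\cB}')$ I would show that each basic $t$-neighbourhood $\cU_{g,m} = \{\varphi : p_{g,m}(\varphi) \le 1\}$ has a polar that is compact in $\dot{\cB}'$. Using the duality formula together with the evenness and formal self-adjointness of $(1-\Delta_n)^m = L_{-2m}*$, the same chain of equivalences as in \Cref{prop3} gives
\[
  \varphi \in \cU_{g,m} \Longleftrightarrow \sup_{\psi \in B_0} \abso{\langle L_{-2m}*(g\psi), \varphi\rangle} \le 1 \Longleftrightarrow \varphi \in (L_{-2m}*(g B_0))^\circ,
\]
so that $\cU_{g,m}^\circ = \overline{L_{-2m}*(g B_0)} \subseteq \dot{\cB}'$. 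By \Cref{prop2p1} this set is compact in $\dot{\cB}'$ as soon as $L_{2\ell}*\cU_{g,m}^\circ = \overline{L_{2(\ell-m)}*(g B_0)}$ is compact in $\cC_{0}$ for some $\ell$. Choosing $\ell > m$ and writing $\mu \coleq L_{2(\ell-m)} \in L^1$, it remains to verify that $C \coleq \mu * (g B_0)$ is relatively compact in $\cC_{0}$.

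That verification rests on the three Arzel\`a--Ascoli conditions, which are the $\sup$-norm analogues of (i)--(iii) in the proof of \Cref{prop3}. Uniform boundedness follows from $\norm{\mu*(g\psi)}_\infty \le \norm{\mu}_1 \norm{g}_\infty$; equicontinuity from $\norm{\tau_h(\mu*(g\psi)) - \mu*(g\psi)}_\infty \le \norm{\tau_h\mu - \mu}_1 \norm{g}_\infty \to 0$ as $h \to 0$; and uniform vanishing at infinity from the estimate, valid for $\abso{x} \ge R$ after splitting the convolution integral at $\abso{\xi} = R/2$,
\[
  \abso{(\mu*(g\psi))(x)} \le \norm{\mu}_1 \sup_{\abso{z} \ge R/2}\abso{g(z)} + \norm{g}_\infty \int_{\abso{\xi} \ge R/2}\abso{\mu(\xi)}\,\ud\xi,
\]
whose right-hand side tends to $0$ as $R \to \infty$ (first term because $g \in \cC_{0}$, second because $\mu \in L^1$). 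All three bounds are uniform in $\psi \in B_0$, which gives the desired compactness in $\cC_{0}$.

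For the reverse inclusion $\kappa(\cD_{L^1}, \dot{\cB}') \subseteq t$ I would take a compact $C \subseteq \dot{\cB}'$, use \Cref{prop2p1} to find $m$ with $L_{2m}*C$ compact in $\cC_{0}$, and then invoke the $\cC_{0}$-analogue of \Cref{lem1}: every compact $K \subseteq \cC_{0}$ satisfies $K \subseteq g B_0$ for some $g \in \cC_{0}$. This is again the Cohen--Hewitt factorization theorem, now with $\cC_{0}$ regarded as a Banach module over the Banach algebra $\cC_{0}$ with approximate identity $\{e^{-k^2\abso{x}^2} : k>0\}$; here compactness of $K$ (not mere boundedness) is essential, since it is what forces the dominating factor $g$ to vanish at infinity. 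Then $L_{2m}*C \subseteq g B_0$ gives $C \subseteq L_{-2m}*(g B_0) \subseteq \cU_{g,m}^\circ$, so $C^\circ \supseteq \cU_{g,m}$ is a $t$-neighbourhood; the equivalence with the family $\varphi \mapsto \sup_{\abso{\alpha}\le m}\norm{g\pd^\alpha\varphi}_1$ is handled exactly as in \Cref{prop3}. The main point requiring care is not any individual estimate but checking that the pairing $\langle L_{-2m}*(g\psi), \varphi\rangle$ is the genuine $\langle \dot{\cB}', \cD_{L^1}\rangle$ duality --- that is, that $g\psi \in \cC_{0}$ indeed forces $L_{-2m}*(g\psi) \in (1-\Delta_n)^m\cC_{0} \subseteq \dot{\cB}'$ --- together with making the $\cC_{0}$ compactness criterion play cleanly the role of the Kolmogorov theorem.
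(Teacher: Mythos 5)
Your proposal is correct and follows essentially the paper's own argument: both inclusions are proved via \Cref{prop2p1}, Young's inequality, an Arzel\`a--Ascoli-type compactness criterion in $\cC_0$, and the containment of compact subsets of $\cC_0$ in sets $g B_0$, and your explicit verification of the norming identity $\norm{f}_1=\sup_{\psi\in B_0}\abso{\langle\psi,f\rangle}$ (which the paper leaves implicit when it carries over the bipolar computation from \Cref{prop3}) is a welcome addition. The only deviations are cosmetic and both sound: for uniform vanishing at infinity you split the convolution integral at $\abso{\xi}=R/2$ where the paper instead dominates by $L_{2(\ell-m)}*\abso{g}\in\cC_0$, and for the lemma that a compact $K\subseteq\cC_0$ lies in some $gB_0$ you invoke Cohen--Hewitt factorization (mirroring \Cref{lem1}) where the paper argues directly from equicontinuity at infinity.
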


\begin{proof}
  We first show that the topology $t$ generated by the above seminorms is finer than the topology of uniform convergence on compact subsets of $\dot{\cB}'$.
  Similarly to the proof of~\Cref{prop3}, we have to show that the set $L_{2(\ell-m)}*(g B_{1,\cC_0})$ is a relatively compact subset of $\dot{\cB}'$, where $B_{1, \cC_0}$ denotes the unit ball of $\cC_0$. We pick $\ell> m$ and, by the compact regularity of $\dot{\cB}'$ and the Arzela-Ascoli theorem, we have to show that $L_{2(\ell-m)}*(g B_{1,\cC_0})$ is bounded as a subset of $\cC_0$ and equicontinous as a set of functions on the Alexandroff compactification of $\mathbb{R}^{n}$. Since $\ell>m$, we have that $L_{2(\ell-m)}\in L^1$. For every $\varphi\in B_{1,\cC_0}$, Young's convolution inequality implies
  \[
    \|L_{2(\ell-m)}*(g\varphi)\|_\infty \leq \|L_{2(\ell-m)}\|_1\|g\varphi\|_\infty \leq \|L_{2(\ell-m)}\|_1\|g\|_\infty,
  \]
  i.e. $L_{2(\ell-m)}*(gB_{1,\cC_0})\subseteq \cC_{0}$ is a bounded set.

  Since a translation of a convolution product can be computed by applying the translation to one of the factors, we can again use Young's convolution inequality to obtain
  \begin{align*}
    \|\tau_h(L_{2(\ell-m)}*(g\varphi))-L_{2(\ell-m)}*(g\varphi)\|_\infty & = \|(\tau_{h}L_{2(\ell-m)}-L_{2(\ell-m)})*(g\varphi)\|_\infty \\
     & \leq \|(\tau_{h}L_{2(\ell-m)}-L_{2(\ell-m)})\|_1 \|g\|_\infty
  \end{align*}
  for all $\varphi$ in the unit ball of $\cC_0$. From this inequality we may conclude that $L_{2(\ell-m)}*(g B_{1,\cC_0})$ is equicontinuous at all points in $\mathbb{R}^{n}$. Therefore we are left to show that it is also equicontinous at infinity. In order to do this, first observe that $|g|\in\cC_{0}$ and $|L_{2(\ell-m)}|=L_{2(\ell-m)}$. Moreover, Lebesgue's theorem on dominated convergence implies that the convolution of a function in $\cC_0$ and an $L^{1}$-function is contained in~$\cC_0$. Finally, by the above reasoning the inequality
  \[
    |(L_{2(\ell-m)}*(g\varphi))(x)| \leq \int_{\mathbb{R}^{n}} |g(x-\xi)| |L_{2(\ell-m)}(\xi)|\,\mathrm{d}\xi = (L_{2(\ell-m)}*|g|)(x)
  \]
  shows that $L_{2(\ell-m)}*(g B_{1,\cC_0})$ is equicontinuous at infinity.

  The proof that $\kappa(\cD_{L^1},\dot{\cB}')$ is finer than $t$ is completely analogous to the corresponding part of \Cref{prop3} if we can show that every compact subset of $\cC_0$ is contained in~$g B_{1,\cC_0}$ for some $g\in\cC_0$. Let $C\subseteq\cC_0$ be a compact set. By the Arzela-Ascoli theorem, it is equicontinuous at infinity, i.e., for every $k\in\mathbb{N}$ there is an $R_k$ such that $|h(x)|\leq 1/k$ for $h\in C$ and all $|x|>R_k$. This condition implies the existence of the required function $g\in\cC_0$ with the above property.
\end{proof}

\section{Properties of the spaces $\cD_{L^p,c}$}\label{sec3}

In \cite[p.~127]{Sch4}, L.~Schwartz proves that the spaces $\cB_c = \cD_{L^\infty,c}$ and $\cB = \cD_{L^\infty}$ have the same bounded sets, and that on these sets the topology $\kappa( \cB, \cD'_{L^1})$ equals the topology induced by $\cE = C^\infty$. Moreover, $\kappa(\cB, \cD'_{L^1})$ is the finest locally convex topology with this property. By an identical reasoning we obtain:

\begin{proposition}\label{prop4} Let $1 \le p < \infty$.
  \begin{enumerate}[label=(\alph*)]
  \item The spaces $\cD_{L^p}$ and $\cD_{L^p, c}$ have the same bounded sets. These sets are relatively $\kappa(\cD_{L^p}, \cD'_{L^q})$-compact and relatively $\kappa(\cD_{L^1}, \dot{\cB}')$-compact for $1<p<\infty$ and $p=1$, respectively.
  \item The topology $\kappa ( \cD_{L^p}, \cD'_{L^q} )$ of $\cD_{L^p, c}$ is the finest locally convex topology on $\cD_{L^p}$ which induces on bounded sets the topologies of $\cE$ or $\cD'$ or $\cD'_{L^p}$.
  \end{enumerate}
\end{proposition}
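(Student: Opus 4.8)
The plan is to follow L.~Schwartz's treatment of $\cB_c$ in \cite[p.~127]{Sch4}, adapting its three structural ingredients to general $p$ and handling the case $p=1$ through the duality $(\dot{\cB}')' = \cD_{L^1}$ recalled in \Cref{sec1}. For the coincidence of bounded sets in part (a), I would first observe that both the original (Fréchet) topology of $\cD_{L^p}$ and the topology $\kappa(\cD_{L^p}, \cD'_{L^q})$ are compatible with the dual pairing $\langle \cD_{L^p}, \cD'_{L^q}\rangle$ (for $p=1$, with $\langle \cD_{L^1}, \dot{\cB}'\rangle$): the original topology equals $\beta = \tau$ by barrelledness, while $\kappa$ lies between the weak and the Mackey topology because every absolutely convex compact subset of the predual is weakly compact. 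Mackey's theorem then yields that all these topologies share the same bounded sets.

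To see that a bounded set $B$ is relatively $\kappa$-compact, I would exploit that the predual ($\cD'_{L^q}$ for $1<q<\infty$, respectively $\dot{\cB}'$ for $q=\infty$) is barrelled---indeed ultrabornological, as recorded in \Cref{sec1} and \Cref{sec2}. Hence $B$, being bounded in the dual $\cD_{L^p}$ of this predual, is equicontinuous; by the Alaoglu--Bourbaki theorem its weak closure is $\sigma(\cD_{L^p}, \cD'_{L^q})$-compact, and by the Ascoli theorem the weak topology and the topology $\kappa$ of uniform convergence on compact subsets of the predual coincide on the equicontinuous set $B$. Thus the weak closure of $B$ is $\kappa$-compact, which gives relative $\kappa$-compactness and, at the same time, shows that $\kappa$ restricts to $\sigma(\cD_{L^p}, \cD'_{L^q})$ on every bounded set.

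For part (b), the first point is that $\kappa$ induces on bounded sets the topologies of $\cE$, $\cD'$ and $\cD'_{L^p}$. Since $\kappa$ agrees with $\sigma(\cD_{L^p}, \cD'_{L^q})$ on bounded sets by the previous step and $\cD$ is dense in the predual, on a bounded set the weak topology is already determined by testing against $\cD$, i.e.\ it coincides with the topology induced by $\cD'$. Using that a set bounded in $\cD_{L^p}$ has all its derivatives bounded in $L^p$---so that $\cD'$-convergence together with boundedness upgrades to convergence of all derivatives uniformly on compacta---one checks that the $\cE$-, $\cD'$- and $\cD'_{L^p}$-topologies all restrict to this same topology $\tau_0$ on $B$.

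Finally, to identify $\kappa$ as the finest locally convex topology inducing $\tau_0$ on bounded sets, I would argue exactly as Schwartz: $\kappa$ has the inducing property by the preceding step, so it is coarser than the finest such topology; for the reverse inclusion one must show that every absolutely convex set meeting each bounded set in a relative $\tau_0$-neighbourhood is already a $\kappa$-neighbourhood. I expect this last step to be the main obstacle, since it is where the special structure of $\cD_{L^p,c}$ enters: its bounded sets are relatively compact and carry the induced topology $\tau_0$, so such a set is a neighbourhood for the topology of uniform convergence on the ($\kappa$-compact) bounded sets. Here one would use the seminorm description of \Cref{prop3,prop3p1} together with the characterisation of the compact subsets of the predual in \Cref{prop2,prop2p1} to convert the bounded-set condition into the polar of a compact set, precisely as in the coincidence argument carried out in the proof of \Cref{prop3}.
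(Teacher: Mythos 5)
Your treatment of part (a) and of the fact that $\kappa$ induces the $\cE$-, $\cD'$- and $\cD'_{L^p}$-topologies on bounded sets is essentially the reasoning the paper borrows from Schwartz \cite[p.~127]{Sch4}, and it is sound for $1<p<\infty$. However, your first step contains a concrete error in the case $p=1$: the Fréchet topology of $\cD_{L^1}$ is \emph{not} compatible with the pairing $\langle\cD_{L^1},\dot{\cB}'\rangle$, because the dual of $\cD_{L^1}$ is $\cD'_{L^\infty}=\cB'$, which strictly contains $\dot{\cB}'$. By \cite[Proposition~7, p.~13]{Ba} the Fréchet topology equals $\beta(\cD_{L^1},\dot{\cB}')$, but it is strictly finer than $\tau(\cD_{L^1},\dot{\cB}')$, whose dual is $\dot{\cB}'$ by Mackey--Arens; so your claim that ``the original topology equals $\beta=\tau$ by barrelledness'' fails for this pairing, and Mackey's theorem on compatible topologies cannot be invoked. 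The conclusion survives, but only via the route you use later anyway: a $\kappa$-bounded set is $\sigma(\cD_{L^1},\dot{\cB}')$-bounded, hence equicontinuous as a subset of $(\dot{\cB}')'$ because $\dot{\cB}'$ is barrelled (ultrabornological), hence $\beta(\cD_{L^1},\dot{\cB}')$-bounded, and $\beta(\cD_{L^1},\dot{\cB}')$ is the original topology.

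The genuine gap is the maximality statement in (b), which is the substantive content of the proposition and which you yourself flag as ``the main obstacle''. What must be shown is: if $U$ is absolutely convex and $U\cap B$ is a relative $\tau_0$-neighbourhood of $0$ for every bounded $B$, then $U$ contains the polar of a compact subset of $\cD'_{L^q}$ (resp.\ $\dot{\cB}'$). Your sketch --- that such a $U$ ``is a neighbourhood for the topology of uniform convergence on the ($\kappa$-compact) bounded sets'' --- is circular: it merely renames the finest topology agreeing with $\tau_0$ on bounded sets, and the assertion that this refinement coincides with $\kappa$ is exactly what has to be proved (a Banach--Dieudonné-type statement, false for general dual pairs). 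Nor does the proof of \Cref{prop3} transfer as you suggest: there one starts from a set already known to be compact in $\cD'_{L^q}$ and applies \Cref{lem1}, whereas here one must \emph{prove} compactness of $U^\circ$. The missing argument runs roughly as follows: $T$-continuous functionals are bounded on bounded sets, so by bornologicity of $\cD_{L^p}$ the dual of $(\cD_{L^p},T)$ is $\cD'_{L^q}$ (for $p=1$ one needs the additional step that a functional in $\cB'$ which is $\tau_0$-continuous on bounded sets lies in $\dot{\cB}'$); since $U\supseteq V\cap B$ with $V$ an absolutely convex $\tau_0$-neighbourhood, one gets $U^\circ\subseteq\overline{\mathrm{acx}}(V^\circ\cup B^\circ)\subseteq V^\circ+B^\circ$, where $V^\circ$ is compact in $\cD'_{L^q}$ (an equicontinuous set of distributions supported in a fixed compact set) and the polars $B^\circ$ of bounded sets form a neighbourhood basis of the (LB)-topology $\beta(\cD'_{L^q},\cD_{L^p})$; letting $B$ grow shows $U^\circ$ is totally bounded, hence relatively compact by completeness, and then $U\supseteq\frac{1}{2}(U^\circ)^\circ$ is a $\kappa$-neighbourhood. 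This (or an equivalent verification via \Cref{prop2} and the Fréchet--Kolmogorov criteria) is precisely the Schwartz argument that the paper transfers verbatim; without it your proposal establishes (a) but not (b).
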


In the following proposition we collect some further properties of the spaces $\cD_{L^p}$:

\begin{proposition}Let $1 \le p \le \infty$.
  The spaces $\cD_{L^p, c}$ are complete, quasinormable, semi-Montel and hence semireflexive. $\cD_{L^p,c}$ is not infrabarrelled and hence neither barrelled nor bornological. $\cD_{L^p, c}$ is a Schwartz space but not a nuclear space.
\end{proposition}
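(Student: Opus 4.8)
The plan is to read off most of the assertions from the identity $(\cD_{L^p,c})' = \cD'_{L^q}$ (for $p=1$ replace $\cD'_{L^q}$ by $\dot{\cB}'$ throughout, and for $p=\infty$ by $\cD'_{L^1}$), which holds because $\kappa(\cD_{L^p},\cD'_{L^q})$ is a topology of the dual pairing $\langle\cD_{L^p},\cD'_{L^q}\rangle$: every compact set is weakly compact, so $\kappa$ lies between the weak and the Mackey topology and leaves the dual unchanged. I would carry out the case $1<p<\infty$ in detail; the case $p=\infty$ is \cite{DD}, and $p=1$ is handled by the same arguments with $\dot{\cB}'$ and \Cref{prop2p1} in place of $\cD'_{L^q}$ and \Cref{prop2}. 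The semi-Montel property is then immediate from \Cref{prop4}: the bounded subsets of $\cD_{L^p,c}$ coincide with those of $\cD_{L^p}$ and are relatively $\kappa$-compact, so every bounded set is relatively compact. Since every semi-Montel space is semireflexive, semireflexivity follows at once.

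For \emph{not infrabarrelled} I would use that, by the bipolar theorem, the equicontinuous subsets of $(\cD_{L^p,c})'=\cD'_{L^q}$ are precisely the subsets of absolutely convex compact sets of $\cD'_{L^q}$. Were $\cD_{L^p,c}$ infrabarrelled, every strongly bounded subset of $\cD'_{L^q}$ would be equicontinuous, hence relatively compact, so $\cD'_{L^q}$ would be semi-Montel. But it is not: the unit ball of its Banach step $L^q$ is bounded and, by \Cref{prop2} together with the translation invariance of the convolutions $L_{2m}*$ (a nonzero translation-invariant operator on $L^q(\bR^n)$ is never compact, since the translates of a fixed image have constant norm and no convergent subsequence), fails to be relatively compact. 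Hence $\cD_{L^p,c}$ is not infrabarrelled, and a fortiori neither barrelled nor bornological.

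Completeness I would obtain from Grothendieck's completeness theorem \cite{H}: the completion of $\cD_{L^p,c}$ consists of the linear forms $f$ on $\cD'_{L^q}$ whose restriction to each absolutely convex compact set is $\sigma(\cD'_{L^q},\cD_{L^p})$-continuous, and it remains to show every such $f$ already lies in $\cD_{L^p}$. On a compact set the weak topology agrees with the finer Hausdorff topology of $\cD'_{L^q}$, so $f$ is continuous on every compact subset of each Banach step $H^{-2m,q}$; passing to the closed absolutely convex hull (compact in the step) of a norm-null sequence shows that $f|_{H^{-2m,q}}$ is sequentially, hence norm-, continuous. By the universal property of the inductive limit $\cD'_{L^q}=\varinjlim_m H^{-2m,q}$, $f$ is then continuous on $\cD'_{L^q}$, i.e.\ $f\in(\cD'_{L^q})'=\cD_{L^p}$. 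Thus the completion equals $\cD_{L^p,c}$, which is therefore complete.

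Finally, for the Schwartz property I would verify the linking criterion on the neighbourhood base $\cU_{g,m}=\{\varphi:\norm{g(1-\Delta_n)^m\varphi}_p\le1\}$ of \Cref{prop3}: given $(g,m)$, produce $h\in\cC_0$ and $k>m$ such that the canonical map between the associated local Banach spaces (each a subspace of $L^p$ via $\varphi\mapsto g(1-\Delta_n)^m\varphi$) is precompact. This is precisely a compactness statement of the kind established in \Cref{prop3}, and the estimate is the Fréchet--Riesz--Kolmogorov argument used there, with smoothing by the $L^1$-kernel $L_{2(k-m)}$, multiplication by the $\cC_0$-factor, and the Cohen--Hewitt factorisation of \Cref{lem1}; organising this into the nested-precompactness condition is the main technical obstacle. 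Since every Schwartz space is quasinormable, quasinormability follows. The space is \emph{not} nuclear, which is the point where the Schwartz and nuclear classes genuinely separate here: the linking maps above are compact but not nuclear, their singular numbers decaying too slowly (as for multiplication and convolution operators on $L^p(\bR^n)$, by Dvoretzky--Rogers applied to the $L^p$-structure of the local Banach spaces), equivalently an $\ell^q$-factor survives in the representation $\cD'_{L^q}\cong\ell^q\mathbin{\widehat{\otimes}}s'$ of \Cref{rem1}. Hence $\cD_{L^p,c}$ is a Schwartz space but not nuclear.
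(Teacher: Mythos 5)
Your treatment of the semi-Montel property and of semireflexivity coincides with the paper's (both rest on \Cref{prop4}). Two of your arguments are correct but genuinely different from the paper's. For non-infrabarrelledness, the paper derives a contradiction from compact regularity of $\cD'_{L^q}=\bigcup_m H^{-2m,q}$ together with the non-compactness of the Sobolev embeddings $H^{2m,q}\to L^q$; you instead observe that infrabarrelledness would force $\cD'_{L^q}$ to be semi-Montel and refute this via \Cref{prop2}, using that $L_{2m}*\colon L^q\to L^q$ is a nonzero translation-invariant, hence non-compact, operator, so the unit ball of the step $L^q$ cannot be relatively compact in $\cD'_{L^q}$. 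This is sound (one small repair: for $q=1$ translates escaping to infinity do not tend to $0$ weakly in $L^1$, so argue with convergence in $\cD'$ instead) and avoids the paper's case distinction. For completeness, the paper cites K\"othe/Horv\'ath results resting on the bornologicity of $\cD'_{L^q}$ and $\dot\cB'$; your argument via Grothendieck's completeness theorem, reduction to the Banach steps through compact absolutely convex hulls of norm-null sequences, and the universal property of $\varinjlim_m H^{-2m,q}$ is a valid, self-contained alternative.

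The Schwartz and quasinormability claims, however, are not proved in your proposal, and this is a genuine gap. You reverse the paper's logic: the paper proves quasinormability first (the dual $\cD'_{L^q}$ is boundedly retractive, equivalently compactly regular, \Cref{prop1}) and then obtains the Schwartz property from ``quasinormable $+$ semi-Montel''; you want to prove Schwartz directly and deduce quasinormability from it. But your direct verification is only a plan, and the key assertion --- that the required precompactness of $\cU_{h,k}$ in the local Banach space attached to $p_{g,m}$ ``is precisely a compactness statement of the kind established in \Cref{prop3}'' --- is not accurate. \Cref{prop3} proves compactness in $L^q$ of the sets $L_{2(\ell-m)}*(gB_{1,q})$, i.e.\ of images of the \emph{dual} unit ball; here you would need that $\{g(1-\Delta_n)^m\varphi : \norm{h(1-\Delta_n)^k\varphi}_p\le 1\}$ is totally bounded in $L^p$. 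Membership in $\cU_{h,k}$ gives no control of $(1-\Delta_n)^k\varphi$ on the region where $h$ is small, so the Fr\'echet--Riesz--Kolmogorov estimates of \Cref{prop3} do not transfer; one would need at least a factorisation $g=h\tilde g$ together with a commutator estimate to move multiplication by $h$ past the convolution $L_{2(k-m)}*$, none of which is supplied. You flag this yourself as ``the main technical obstacle'', so the Schwartz property --- and with it quasinormability --- remains unproven in your write-up.

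The non-nuclearity argument has the same character. ``Singular numbers decaying too slowly'' and ``Dvoretzky--Rogers applied to the $L^p$-structure of the local Banach spaces'' is an assertion, not a proof: non-nuclearity requires exhibiting one neighbourhood $U$ such that for \emph{no} smaller neighbourhood $V$ the canonical map $\widehat{E_V}\to\widehat{E_U}$ is nuclear, and Dvoretzky--Rogers for a single Banach space does not yield this, because summability must ultimately be tested in the non-normable space $\cD_{L^p,c}$ itself, not in one local Banach space. The paper makes this precise: it reduces, via $\cD_{L^p,c}\cong\ell^p_c\mathbin{\widehat\otimes}s$ (\Cref{prop6}) and Grothendieck's tensor-product theorem, to the non-nuclearity of $\ell^p_c$, and then constructs an explicit sequence that is unconditionally but not absolutely summable in $\ell^p_c$ (with the Grothendieck--Pietsch criterion for $p=1$); even there the real work is checking unconditional summability for the $\kappa(\ell^p,\ell^q)$-topology, exactly the step your sketch skips. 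Your phrase ``an $\ell^q$-factor survives'' points at the right mechanism, but neither the tensor reduction nor the summability construction is carried out.
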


\begin{proof}
  (1) The completeness follows either from \cite[(1), p.~385]{K1}, or from \cite[Ex.~7(b), p.~243]{H}. It must be taken into account that $\cD'_{L^q}$ and $\dot\cB'$ are bornological.

  (2) $\cD_{L^p,c}$ is quasinormable since its dual $\cD'_{L^q}$ is boundedly retractive (see the argument in \cite[p.~73]{DD} and use \cite[Def.~4, p.~106]{G3}) which, by \cite[7.~Prop., p.~101]{B} is equivalent with its compact regularity (Proposition \ref{prop1}).

  (3) Since bounded and relative compact sets coincide in $\cD_{L^p, c}$ it is a semi-Montel space.

  (4) Infrabarrelledness and the Montel-property would imply that $\cD_{L^p, c}$ is Montel which in turn implies the coincidence of the topologies $\kappa ( \cD_{L^p}, \cD'_{L^q})$ and $\beta(\cD_{L^p}, \cD'_{L^q})$. This is a contradiction, since together with the compact regularity of the inductive limit representation $\cD'_{L^q} = \bigcup H^{-2m,q}$ this would imply that for
  every $m$ there is $m'$ such that the unit ball of $H^{-2m,q}$ is contained and relatively compact in $H^{-2m', q}$. In case $m'\le m$, the continuous inclusion $H^{-2m',q} \subseteq H^{-2m, q}$ would give that the unit ball of $H^{-2m,q}$ is compact, i.e., that $H^{-2m,q}$ is finite dimensional; in case $m' \ge m$, by tranposition the inclusion $H^{2m,q} \to H^{2m', q}$ and a fortiori the inclusion $H^{2m,q} \to L^q$ would be compact, which cannot be the case \cite[Example 6.11, p.~173]{Adams}.  

  (5) $\cD_{L^p, c}$ is a Schwartz space \cite[Def.~5, p.~117]{G3} because it is quasinormable and semi-Montel.


  (6) Using the sequence-space representation $\cD_{L^p,c}\cong \ell^p_c\mathbin{\widehat{\otimes}}s$, we can conclude by~\cite[Ch.~II \S 3 n°2, Prop.~13, p.~76]{G} that $\cD_{L^p,c}$ is nuclear if and only if $\ell^p_c$ is nuclear. We first consider the case $p>1$. The nuclearity of $\ell^p_c$ would imply
  \[
    \ell^1\{\ell^p_c\} = \ell^1\mathbin{\widehat{\otimes}_\pi} \ell^p_c = \ell^1\mathbin{\widehat{\otimes}_\varepsilon} \ell^p_c = \ell^1\langle \ell^p_c\rangle
  \]
  where $\ell^1\{\ell^p_c\}$ and $\ell^1\langle \ell^p_c\rangle$ is the space of absolutely summable and of unconditionally summable sequences in $\ell^p_c$, respectively, see~\cite[pp.~341,~359]{Jarchow}.
  We now proceed by giving an example of an element of the space at the very right which is not contained in the space at the very left. Fix $\varepsilon>0$ small enough. Using Hölder's inequality, we observe that
  \[
    \Big\|\Big(\sum_{k=1}^{\infty} \frac{\delta_{jk}}{k^{(1+\varepsilon)/p}}f_k\Big)_j\Big\|_{1} = \Big|\sum_{k=1}^{\infty}\frac{1}{k^{(1+\varepsilon)/p}} f_k\Big| \leq C \|f\|_q
  \]
  for every $f=(f_k)_{k=1}^{\infty}\in \ell^q$ which together with the characterisation of unconditional convergence in~\cite[Theorem~1.15]{Weill} and the condition that compact subsets of $\ell^q$ are small at infinity implies that the sequence is unconditionally convergent.
  On the other hand taking $(k^{-\alpha})_{k=1}^{\infty}\in c_0$ with $\alpha= 1-\frac{1+\varepsilon}{p}$ yields
  \[
    \sum_{j=1}^{\infty} \Big\|\delta_{jk} k^{-(1+\varepsilon)/p} g_k)_{k}\|_p = \sum_{j=1}^{\infty} j^{-(1+\varepsilon)/p-\alpha} = \sum_{j=1}^{\infty} \frac{1}{j} = \infty
  \]
  from which we may conclude that $(\delta_{jk}k^{-(1+\varepsilon)/p})_{j,k}$ is not an absolutely summable sequence in $\ell^p_c$.

  For the case $p=1$ we use the Grothendieck-Pietsch criterion, see~\cite[p.~497]{Jarchow}, and observe that $\Lambda(c_{0,+})=\ell^1_c$. Choosing $\alpha=(1/k)_{k=1}^{\infty}$ provides the necessary sequence with $(\alpha_k/\beta_k)_{k=1}^{\infty}\not\in\ell^{1}$ for every $\beta\in c_{0,+}$ with $\beta\geq \alpha$.
\end{proof}

\begin{remark}
  The above proof actually shows that $\ell^p_c$, for $1\leq p\leq \infty$ is not nuclear. From this we may also conclude that $\cD'_{L^p,c}$, $1\leq p\leq\infty$, is not nuclear.
\end{remark}

Analogous to the table with properties of the spaces $\cD^F$, $\cD^{\prime F}$ (defined in \cite[p.~172,173]{H}) in \cite[p.~19]{BD}, we list properties of $\cD_{L^p}$ and $\cD_{L^p,c}$ in the following table ($1 \le p \le \infty$):

\begin{center}
  \begin{tabular}{@{}lll@{}} \toprule
    property & $\cD_{L^p}$ & $\cD_{L^p, c}$ \\
    \midrule
    complete & $+$ & $+$ \\
    quasinormable & $+$ & $+$ \\
    metrizable & $+$ & $-$ \\
    bornological & $+$ & $-$ \\
    barrelled & $+$ & $-$ \\
    (semi-)reflexive & $+$ ($1 < p < \infty$) & $+$ \\
    semi-Montel & $-$ & $+$ \\
    Schwartz & $-$ & $+$ \\
    nuclear & $-$ & $-$ \\ \bottomrule
  \end{tabular}
\end{center}

\section{Sequence space representations of the spaces $\cD_{L^p, c}$, $\cD'_{L^q, c}$ and the compact regularity of $\cD'_{L^q}$}\label{sec4}

P.~and S.~Dierolf conjectured in~\cite[p.~74]{DD} the isomorphy
\[
  \cD_{L^\infty, c} = \cB_c \cong \ell^\infty_c \mathbin{\widehat\otimes} s,
\]
where $\ell^\infty_c = ( \ell^\infty, \kappa ( \ell^\infty, \ell^1 ) ) = ( \ell^\infty, \tau ( \ell^\infty, \ell^1))$. This conjecture is proven in \cite[1.~Theorem, p.~293]{BM}. More generally, we obtain:

\begin{proposition}\label{prop6} Let $1 \le p, q \le \infty$.
  \begin{enumerate}[label=(\alph*)]
  \item $\cD_{L^p,c} \cong \ell^p_c \mathbin{\widehat\otimes} s$;
  \item $\cD'_{L^q, c} \cong \ell^q_c \mathbin{\widehat\otimes} s'$.
  \end{enumerate}
\end{proposition}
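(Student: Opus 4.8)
The plan is to lift the topological isomorphisms of the \emph{defining} topologies, recalled in \Cref{rem1}, to the $\kappa$-topologies by a transpose argument, and then to reduce everything to one fact: that the $\kappa$-topology commutes with the completed tensor product by the nuclear Montel spaces $s$ and $s'$.

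First I would start from the topological isomorphism $T\colon \cD_{L^p}\to \ell^p\mathbin{\widehat{\otimes}}s$ of \Cref{rem1}; its transpose $T^t\colon \ell^q\mathbin{\widehat{\otimes}}s'\to\cD'_{L^q}$ is precisely the dual isomorphism recorded there and is a topological isomorphism for the strong dual topologies. The elementary but decisive observation is that a topological isomorphism $T$ of locally convex spaces makes $T^t$ a bijection between the absolutely convex compact subsets of the two strong duals, and that the identity $\langle Tx,y'\rangle=\langle x,T^t y'\rangle$ turns a $\kappa$-seminorm $p_D$ on the range into the $\kappa$-seminorm $p_{T^t D}$ on the domain. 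Consequently $T$ is at once a homeomorphism $\cD_{L^p,c}\to(\ell^p\mathbin{\widehat{\otimes}}s,\kappa)$ and $T^t$ a homeomorphism $(\ell^q\mathbin{\widehat{\otimes}}s',\kappa)\to\cD'_{L^q,c}$, the symbol $\kappa$ referring in both cases to uniform convergence on the absolutely convex compact subsets of the relevant (strong) dual. This reduces (a) to the topological identity $(\ell^p\mathbin{\widehat{\otimes}}s,\kappa)=\ell^p_c\mathbin{\widehat{\otimes}}s$ and (b) to $(\ell^q\mathbin{\widehat{\otimes}}s',\kappa)=\ell^q_c\mathbin{\widehat{\otimes}}s'$. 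The endpoint cases $p,q\in\{1,\infty\}$ are treated the same way, with the preduals $\dot{\cB}$, $\dot{\cB}'$ replacing $\cD'_{L^q}$, $\cD_{L^p}$ and with the representations $\dot{\cB}\cong c_0\mathbin{\widehat{\otimes}}s$, $\dot{\cB}'\cong s'\mathbin{\widehat{\otimes}}c_0$; for $p=\infty$ in (a) this recovers the theorem of Bonet and Maestre.

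The core to be proved is therefore a commutation lemma: for the Banach space $E\in\{\ell^p,\ell^q\}$ and the nuclear Montel space $G\in\{s,s'\}$ one has $(E\mathbin{\widehat{\otimes}}G,\kappa)=E_c\mathbin{\widehat{\otimes}}G$, where $E_c=(E,\kappa(E,E'))$. Since $G$ is nuclear the projective and injective tensor topologies coincide, and since $G$ is Montel one has $G_\kappa=G$; thus the claim amounts to showing that the sets $\overline{\Gamma(K\otimes L)}$, with $K$ absolutely convex compact in $E'$ and $L$ absolutely convex compact in $G'$, form a basis of the absolutely convex compact subsets of the dual $E'\mathbin{\widehat{\otimes}}G'$. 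Indeed, the $\kappa$-seminorm attached to such a set is exactly the $\varepsilon$-tensor seminorm built from $p_K$ on $E_c$ and $p_L$ on $G_\kappa=G$, so the two seminorm families coincide. That each $\overline{\Gamma(K\otimes L)}$ is compact is the routine half; the substance is the converse \emph{decomposition} of an arbitrary absolutely convex compact set, and this is where I expect the main obstacle to lie.

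For (b) the dual of $\ell^p\mathbin{\widehat{\otimes}}s$ is the nuclear Fréchet space $\cD_{L^p}$ itself, and the decomposition of its (metrizable) compact sets is the classical decomposition of compact sets in a completed tensor product with a nuclear Fréchet space. For (a), however, the dual $\ell^q\mathbin{\widehat{\otimes}}s'$ is only an (LB)-space, and here I would invoke the compact regularity of \Cref{prop1}: by \Cref{prop2} every absolutely convex compact set lies, and is compact, in a single Banach step $\ell^q\mathbin{\widehat{\otimes}_\varepsilon}(c_0)_{-k}\cong(c_0)_{-k}(\ell^q)$. In this concrete model a set is relatively compact precisely when it is sectionwise relatively compact in $\ell^q$ and its weighted tails $j^{-k}\norm{y_j}_q$ tend to $0$ uniformly; rescaling the sections by a rapidly decreasing weight collects them into one absolutely convex compact set $K\subseteq\ell^q$, and a convergent factor such as $\sum_j j^{-2}<\infty$ then lets one dominate the pairing $\sup_{y}\bigl|\sum_j\langle x_j,y_j\rangle\bigr|$, for $x\in\ell^p\mathbin{\widehat{\otimes}}s$, by a single defining seminorm $\sup_j j^m p_K(x_j)$ of $\ell^p_c\mathbin{\widehat{\otimes}}s$. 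Carrying out this rescaling-and-summation estimate uniformly over the compact set, and verifying the analogous statement with $s'$ on the other tensor factor, is the technical heart of the argument; once it is in place, both (a) and (b) follow from the reduction above.
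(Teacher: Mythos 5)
Your proposal is correct in substance, but it takes a genuinely different route from the paper. The paper's proof is essentially by citation: for (a) it applies the tensor-product duality results of \cite[4.1 Theorem, p.~52 and 2.2 Prop., p.~46]{DF} to Valdivia's representation $\cD'_{L^q} \cong \ell^q \mathbin{\widehat\otimes} s'$ from \Cref{rem1}, handling $p=1$ via $\cD_{L^1,c} \cong (\dot\cB')'_c \cong (c_0 \mathbin{\widehat\otimes} s')'_c$ and quoting Bonet--Maestre for $p=\infty$; for (b) it invokes Buchwalter's duality theorem $(E \mathbin{\widehat\otimes_\e} F)'_c \cong E'_c \mathbin{\widehat\otimes_\pi} F'_c$ for the Fr\'echet spaces $E = \ell^p$, $F = s$ (and again the predual $\dot\cB$ when $q=1$). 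You instead reduce everything, via the transpose argument (which is sound), to the single commutation lemma $(E \mathbin{\widehat\otimes} G, \kappa) = E_c \mathbin{\widehat\otimes} G$ and prove it by hand: for (b) through the classical Grothendieck decomposition of compact subsets of a completed projective tensor product of metrizable spaces, and for (a) through compact regularity (\Cref{prop1}, \Cref{prop7}) plus an explicit rescaling in the steps $(c_0(\ell^q))_{-k}$. That rescaling does work: if $C$ is compact in $(c_0(\ell^q))_{-k}$ with sections $C_j$, then $K \coleq \overline{\Gamma}\bigl(\bigcup_j j^{-k-2} C_j\bigr)$ is compact in $\ell^q$ and $\sup_{y \in C}\abso{\sum_j \langle x_j, y_j\rangle} \le \bigl(\sum_j j^{-2}\bigr)\sup_j j^{k+4} p_K(x_j)$; conversely, for compact absolutely convex $K \subseteq \ell^q$ and $m \in \bN_0$ the set $\{j^m z \otimes e_j : z \in K,\ j \in \bN\} \cup \{0\}$ is compact in the step $(c_0(\ell^q))_{-(m+1)}$ and its $\kappa$-seminorm is exactly $\sup_j j^m p_K(x_j)$, which gives the reverse comparison. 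So your route effectively re-proves the Buchwalter/\cite{DF} dualities in this concrete sequence-space setting; what it buys is self-containedness -- in particular Bonet--Maestre's theorem becomes a corollary rather than an input -- at the price of the technical decomposition work that the paper deliberately outsources to the literature.

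One slip you should fix: you call the dual appearing in (b) ``the nuclear Fr\'echet space $\cD_{L^p}$''. First, the relevant dual there is that of $\ell^q \mathbin{\widehat\otimes} s'$, namely $\cD_{L^p} \cong \ell^p \mathbin{\widehat\otimes} s$ (not the dual of $\ell^p \mathbin{\widehat\otimes} s$); second, $\cD_{L^p}$ is \emph{not} nuclear -- the paper itself proves this. Neither point breaks your argument: what you actually use is only the decomposition of compact subsets of the Fr\'echet space $\ell^p \mathbin{\widehat\otimes} s$ (for which metrizability, or nuclearity of the factor $s$ alone, suffices), but as stated the sentence asserts something false.
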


\begin{proof}
  (1) By means of M.~Valdivia's isomorphy (see \Cref{rem1} \ref{rem1a}) $\cD'_{L^q} \cong \ell^q \mathbin{\widehat\otimes} s'$ it follows, for $q < \infty$, by \cite[4.1 Theorem, p.~52 and 2.2 Prop., p.~46]{DF}:
  \[
    \cD_{L^p,c} \cong \ell^p_c \mathbin{\widehat\otimes} s,\quad \sfrac{1}{p} + \sfrac{1}{q} = 1.
  \]
  If $p=1$, $\cD_{L^1, c} \cong (\dot\cB')'_c \cong (c_0 \mathbin{\widehat\otimes} s')'_c \cong \ell^1_c \mathbin{\widehat\otimes} s$ by \cite[4.1 Theorem, p.~52 and 2.2 Prop., p.~46]{DF} and \cite[Prop.~7, p.~13]{Ba}. The case $p=\infty$ is a special case of \cite[1.~Theorem, p.~293]{BM}.

  (2) The second Theorem on duality of H.~Buchwalter \cite[(5), p.~302]{K2} yields for two Fr\'echet spaces $E,F$:
  \[
    (E \mathbin{\widehat\otimes_\e} F)'_c \cong E'_c \mathbin{\widehat\otimes_\pi} F'_c,
  \]
  and hence, for $E = \ell^p$ and $F = s$, $E \mathbin{\widehat\otimes} F = \ell^p \mathbin{\widehat\otimes} s$ which implies
  \[
    \cD'_{L^q, c} \cong (\ell^p \mathbin{\widehat\otimes} s)'_c \cong \ell^q_c \mathbin{\widehat\otimes} s'
  \]
  if $1 < q \le \infty$, $\sfrac{1}{p} + \sfrac{1}{q} = 1$. If $q=1$,
  \[
    \cD'_{L^1,c} \cong (\dot\cB)'_c \cong (c_0 \mathbin{\widehat\otimes} s)'_c \cong \ell^1_c \mathbin{\widehat\otimes} s'.
  \]
  An alternative proof for (2) can also by given using \cite[4.1 Theorem, p.~52 and 2.2 Prop., p.~46]{DF} again.
\end{proof}

\begin{remark}
  The sequence space representations of $\cD_{L^p, c}$ and $\cD'_{L^q, c}$ yield a further proof of the quasinormability of these spaces. Indeed, $\ell^p_c$ and $\ell^q_c$ are quasinormable since their duals are Banach spaces and thus, they fulfill the strict Mackey convergence condition \cite[p.~106]{G3}. The claim follows from the fact that the completed tensor product with $s$ or $s'$ remains quasinormable by \cite[Chapter II, Prop.~13.b, p.~76]{G}.
\end{remark}

The compact regularity of $\cD'_{L^q}$ as a countable inductive limit of Banach spaces is proven in Proposition \ref{prop1}. By means of the sequence space representation
\[
  \cD'_{L^q} \cong \ell^q \mathbin{\widehat\otimes} s'
\]
which has been presented in \Cref{rem1} \ref{rem1a} we can give a second proof:

\begin{proposition}\label{prop7}
  Let $E$ be a Banach space.
  \begin{enumerate}[label=(\alph*)]
  \item The inductive limit representations
    \[
      s' \mathbin{\widehat\otimes} E = \varinjlim_k ( ( c_0)_{-k} \mathbin{\widehat\otimes_\e} E) = \varinjlim_k ( c_0 \mathbin{\widehat\otimes_\e} E)_{-k} = \varinjlim_k (c_0(E))_{-k}
    \]
    are valid.
  \item The inductive limit $\varinjlim_k ( c_0(E))_{-k}$ is compactly regular.
  \end{enumerate}
\end{proposition}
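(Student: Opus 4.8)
The plan is to prove the three identifications in (a) from the inside out, reducing everything to the concrete weighted sequence spaces $(c_0(E))_{-k}$, and then to obtain (b) by checking condition~(Q) for that spectrum, exactly as in the proof of \Cref{prop1}. I regard (b) as logically prior: it is a statement about the explicit inductive spectrum $\big((c_0(E))_{-k}\big)_k$ alone, whereas (a) only serves to identify the limit of this spectrum with $s'\mathbin{\widehat\otimes}E$.

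For the rightmost equality in (a) I would invoke the classical identification $c_0\mathbin{\widehat\otimes_\e}E = c_0(E)$ of the injective tensor product of $c_0$ with a Banach space as the space of $E$-valued null sequences (see, e.g., \cite{Jarchow}). The middle equality is then a transport of weights: for each $k$ the diagonal map $(x_j)_j\mapsto (j^{k}x_j)_j$ is an isometric isomorphism $c_0\to (c_0)_{-k}$, so tensoring with $\mathrm{id}_E$ and using functoriality of $\mathbin{\widehat\otimes_\e}$ identifies $(c_0)_{-k}\mathbin{\widehat\otimes_\e}E$, realised as $E$-valued sequences, with $(c_0\mathbin{\widehat\otimes_\e}E)_{-k}=(c_0(E))_{-k}$. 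These isomorphisms are compatible with the linking maps, so they match the two spectra step by step.

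The leftmost equality is the crux. It asserts that the completed injective tensor product commutes with the inductive limit defining $s'$,
\[
  \Big(\varinjlim_k (c_0)_{-k}\Big)\mathbin{\widehat\otimes_\e}E \;=\; \varinjlim_k\big((c_0)_{-k}\mathbin{\widehat\otimes_\e}E\big).
\]
Since $s'$ is nuclear, being the strong dual of the nuclear Fr\'echet space $s$, the left-hand side is unambiguous ($\mathbin{\widehat\otimes}=\mathbin{\widehat\otimes_\e}=\mathbin{\widehat\otimes_\pi}$). I would then study the canonical continuous map from the right-hand inductive limit into $s'\mathbin{\widehat\otimes_\e}E$: by the injectivity (embedding) property of the $\e$-norm each step $(c_0)_{-k}\mathbin{\widehat\otimes_\e}E$ carries exactly the topology induced from $s'\mathbin{\widehat\otimes_\e}E$, the map is injective, and its image contains the dense subspace $s'\otimes E$. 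Using the completeness and compact regularity furnished by (b) — which force the inductive-limit topology to agree with the subspace topology induced from $s'\mathbin{\widehat\otimes_\e}E$ — this continuous dense injection becomes a topological isomorphism; alternatively one may cite directly the commutation of $\mathbin{\widehat\otimes_\e}$ with regular inductive limits from Grothendieck's theory of (DF)-spaces \cite{G, DF}. I expect this topological identification to be the main obstacle, all the more so because compactness of the linking maps of $s'$ is \emph{not} inherited by the tensored spectrum: the maps $(c_0(E))_{-k}\to (c_0(E))_{-(k+1)}$, being $\mathrm{diag}(j^{-1})\otimes\mathrm{id}_E$, fail to be compact once $\dim E=\infty$, so the limit is genuinely not an (LS)-space and its good behaviour has to be argued rather than read off.

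That argument is exactly (b). Writing the norm of the $k$-th step as $\norm{x}_{-k}\coleq \sup_{j}j^{-k}\norm{x_j}_E$, I would verify condition~(Q) in the form used for \Cref{prop1}: given $m$, fix $k=m+1$; for $\ell>k$ and $\e>0$ pick $N$ with $j^{-(k-m)}\le\e$ for $j\ge N$. Splitting the supremum at $N$, the tail $j\ge N$ contributes at most $\e\norm{x}_{-m}$ while the finitely many terms $j<N$ contribute at most $N^{\ell-k}\norm{x}_{-\ell}$, so
\[
  \norm{x}_{-k}\;\le\;\e\,\norm{x}_{-m}+N^{\ell-k}\norm{x}_{-\ell}\qquad(x\in (c_0(E))_{-m}).
\]
As the weights act on the index $j$ only and leave $\norm{x_j}_E$ untouched, this estimate is uniform in $E$ and holds for every Banach space. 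By Wengenroth's characterisation of compactly regular (LF)-spaces through condition~(Q) (\cite[Thm.~2.7, p.~252]{W2}, \cite[Thm.~6.4, p.~112]{W1}; for (Q) see \cite[Prop.~2.3, p.~62]{V}), the inductive limit $\varinjlim_k (c_0(E))_{-k}$ is compactly regular, which is~(b) and, in particular, supplies the completeness used above.
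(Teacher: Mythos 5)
Your part (b) is correct and is in substance the paper's own argument: the paper also verifies condition (Q) for the spectrum $\bigl((c_0(E))_{-k}\bigr)_k$, observing that the required inequality only involves the scalar sequence $(\norm{x_j}_E)_j\in s'$ and that the scalar spectrum satisfies (Q) because $s'$ is an (LS)-space; your splitting of the supremum at $N$ proves exactly that estimate by hand, with the constant $C=N^{\ell-k}$ legitimately depending on $\e$ and $\ell$. The two right-hand identifications in (a), via $c_0\mathbin{\widehat\otimes_\e}E=c_0(E)$ and the diagonal weight isometries, are also fine.

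The genuine gap is in your argument for the leftmost equality $s'\mathbin{\widehat\otimes}E=\varinjlim_k\bigl((c_0)_{-k}\mathbin{\widehat\otimes_\e}E\bigr)$. Your key claim --- that by the injectivity of the $\e$-tensor product each step $(c_0)_{-k}\mathbin{\widehat\otimes_\e}E$ carries exactly the topology induced from $s'\mathbin{\widehat\otimes_\e}E$ --- is false. Injectivity of $\mathbin{\widehat\otimes_\e}$ turns topological embeddings into topological embeddings, but the inclusion $(c_0)_{-k}\hookrightarrow s'$ is not a topological embedding to begin with: the inclusion $(c_0)_{-k}\to(c_0)_{-k-1}$ is compact, so the unit ball of $(c_0)_{-k}$ is relatively compact in $s'$, and if $s'$ induced the norm topology on $(c_0)_{-k}$ this ball would be norm-precompact, forcing $(c_0)_{-k}$ to be finite-dimensional by Riesz's lemma. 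Indeed, for $E=\bC$ your claim literally asserts this embedding. This is the usual situation in any non-trivial (LS)-space: the limit topology is strictly coarser on each step than the step topology.

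With that claim removed, the rest of the deduction collapses: a continuous injection with dense image from a complete space into a complete space need not be surjective, let alone open onto its image (consider $\ell^1\hookrightarrow\ell^2$); completeness only upgrades a map already known to be a topological embedding, and compact regularity of the limit controls its compact subsets, not its topology relative to a coarser Hausdorff topology. Your fallback citation --- a general commutation of $\mathbin{\widehat\otimes_\e}$ with regular inductive limits from Grothendieck's (DF)-theory \cite{G,DF} --- is not available in that generality; as you yourself observe, the tensored spectrum has non-compact linking maps once $\dim E=\infty$, and the commutation of $\e$-tensor products with (LB)-inductive limits is a delicate matter that holds only under additional hypotheses. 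This is precisely why the paper proves (a) by invoking Hollstein's theorem on inductive limits and $\e$-tensor products \cite[Theorem 4.1, p.~55]{Ho}. To repair your proof, keep your reductions and your part (b), but replace the embedding argument for the leftmost equality by that citation (or by an actual proof of openness of the canonical map, which is the real content of Hollstein's result).
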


\begin{proof}
  (1) The assertion follows from \cite[Theorem 4.1, p.~55]{Ho}.

  (2) The compact regularity of $\varinjlim_k ( c_0(E))_{-k}$ is a consequence of \cite[Thm.~6.4, p.~112]{W1} (or \cite[Thm.~2.7, p.~252]{W2}) and the validity of condition (Q) (see \cite[Prop.~2.3, p.~62]{V}). The condition (Q) reads as:
  \begin{gather*}
    \forall m \in \bN_0\ \exists k>m\ \forall \e>0\ \forall l > k\ \exists C > 0\ \forall x = (x_j)_j \in (c_0(E))_{-m}: \\
    \sup_j j^{-k} \norm{x_j} \le \e \sup_j j^{-m} \norm{x_j} + C \sup_j j^{-l} \norm{x_j}.
  \end{gather*}
  For the sequences $(x_j)_j \in (c_0(E))_{-m}$ the sequence $(\norm{x_j})_j$ is contained in $s' = \varinjlim_k (c_0)_{-k}$. Thus, (Q) is fulfilled because $s'$ is an (LS)-space.
\end{proof}

\begin{corollary} The spaces $\dot{\cB}'\cong\varinjlim_{k} (c_0(c_0))_{-k}$ and $\cD'_{L^q} \cong \varinjlim_k (c_0(\ell^q))_{-k}$, for $1 \le q \le \infty$, are compactly regular countable inductive limits of Banach spaces.
\end{corollary}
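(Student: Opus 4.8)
The plan is to read off both assertions as instances of \Cref{prop7}. Since that proposition is stated for an arbitrary Banach space $E$, it suffices to identify the correct $E$ in each case and then to invoke the sequence space representations already established earlier in the paper; no new analytic work is required.

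First I would treat $\dot{\cB}'$. By the representation $\dot{\cB}' \cong s' \mathbin{\widehat\otimes} c_0$ recorded at the beginning of \Cref{sec4}'s predecessor (following~\cite[Theorem~3, p.~13]{Ba}), and since $c_0$ is a Banach space, \Cref{prop7}(a) applied with $E = c_0$ gives the identification
\[
  \dot{\cB}' \cong s' \mathbin{\widehat\otimes} c_0 = \varinjlim_k (c_0(c_0))_{-k},
\]
while \Cref{prop7}(b) with the same $E$ yields that this inductive limit is compactly regular. Since the terms $(c_0(c_0))_{-k}$ are Banach spaces and the limit is countable, this is exactly the claimed conclusion for $\dot{\cB}'$.

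Next I would treat $\cD'_{L^q}$ for $1 \le q \le \infty$. Here the representation $\cD'_{L^q} \cong \ell^q \mathbin{\widehat\otimes} s' \cong s' \mathbin{\widehat\otimes} \ell^q$ from \Cref{rem1}\,\ref{rem1a} is available, and $\ell^q$ is a Banach space for every $q$ in this range. Applying \Cref{prop7}(a) and (b) with $E = \ell^q$ then gives both
\[
  \cD'_{L^q} \cong \varinjlim_k (c_0(\ell^q))_{-k}
\]
and the compact regularity of this limit, again a countable inductive limit of Banach spaces.

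There is no genuine obstacle: the substantive content has already been absorbed into \Cref{prop7}, and the only points to verify are that $c_0$ and each $\ell^q$ ($1 \le q \le \infty$) are Banach spaces, which is immediate. The sole cosmetic step is the commutativity of the completed tensor product, legitimate because $s$ is nuclear so that $\mathbin{\widehat\otimes_\e}$ and $\mathbin{\widehat\otimes_\pi}$ coincide and the factors may be interchanged; this is what brings $s'$ to the left in the $\cD'_{L^q}$ case so that \Cref{prop7} applies verbatim.
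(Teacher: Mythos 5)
Your proposal is correct and matches the paper's intent exactly: the corollary is stated without separate proof precisely because it is the instance of \Cref{prop7} with $E = c_0$ (using $\dot{\cB}' \cong s' \mathbin{\widehat\otimes} c_0$) and $E = \ell^q$ (using $\cD'_{L^q} \cong \ell^q \mathbin{\widehat\otimes} s'$ from \Cref{rem1}). One trivial nitpick: interchanging the factors of a completed tensor product is always legitimate; the nuclearity of $s$ (hence of $s'$) is only needed so that $\mathbin{\widehat\otimes}$ is unambiguous, i.e.\ so that the $\varepsilon$- and $\pi$-topologies coincide.
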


\section{``Function''-seminorms in $L^p_c$ and $\cM^1_c$.}\label{sec5}

Motivated by the description of the topology $\kappa ( \ell^p, \ell^q)$ of the sequence space $\ell^p_c$ by the seminorms
\[
  \ell^p \to \bR,\quad x = (x_j)_{j \in \bN} \mapsto p_g(x) = \left( \sum_{j=1}^\infty g(j) \abso{x(j)}^p \right)^{1/p} = \norm{ g \cdot x }_p,
\]
$g \in c_0$, see I.~Mack's master thesis \cite{M} for the case $p=\infty$, we also investigate the description of the topologies $\kappa(L^p, L^q)$ and $\kappa(\cM^1, \cC_0)$ by means of ``function''-seminorms.

Denoting the space of bounded and continuous functions by $\cB^0$ (see \cite[p.~99]{Sch5}) we recall that $\cB^0_c = (\cB^0, \kappa(\cB^0, \cM^1))$ has the topology of R.~C.~Buck.

\begin{proposition}\label{prop8} Let $1 < p \le \infty$.
  The topology $\kappa(L^p, L^q)$ of $L^p_c$ is generated by the family of seminorms
  \[
    L^p \to \bR_+, f \mapsto p_{g,h}(f) = \norm{ h * (gf)}_p,\quad g \in \cC_0, h \in L^1.
  \]
\end{proposition}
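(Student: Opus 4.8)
The plan is to follow the two–part scheme of the proof of \Cref{prop3}. Write $B_{1,p}$ and $B_{1,q}$ for the unit balls of $L^p$ and $L^q$, let $t$ denote the topology on $L^p$ generated by the seminorms $p_{g,h}$, and set $\cU_{g,h}\coleq\{f\in L^p:p_{g,h}(f)\le 1\}$. With $\check h(x)\coleq h(-x)$ the adjoint of convolution by $h$ is convolution by $\check h$, so that $\langle h*(gf),\psi\rangle=\langle f,g(\check h*\psi)\rangle$ for all $\psi\in L^q$; hence
\[
 p_{g,h}(f)=\sup_{\psi\in B_{1,q}}\abso{\langle f,g(\check h*\psi)\rangle}\quad\text{and}\quad \cU_{g,h}=\big(g(\check h*B_{1,q})\big)^\circ .
\]
Since $g(\check h*B_{1,q})$ is absolutely convex, the bipolar theorem gives $\cU_{g,h}^\circ=\overline{g(\check h*B_{1,q})}\subseteq L^q$. (Throughout $1<p\le\infty$ means $1\le q<\infty$, so the $L^q$ compactness criterion and all mollifier approximations below are available; for $p=\infty$ one reads $q=1$.)

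For the inclusion $t\subseteq\kappa(L^p,L^q)$ it suffices to show that $\cU_{g,h}^\circ$ is compact in $L^q$, for then $\cU_{g,h}=(\cU_{g,h}^\circ)^\circ$ is a basic $\kappa$–neighbourhood of $0$. I would verify the three conditions of the Fréchet–Riesz–Kolmogorov–Weyl theorem \cite[Thm.~6.4.12, p.~140]{Sch3} for $C\coleq g(\check h*B_{1,q})$, as in \Cref{prop3}, but now with the two factors cleanly separated: boundedness and equicontinuity are supplied by $h\in L^1$ and smallness at infinity by $g\in\cC_0$. Concretely, $\norm{g(\check h*\psi)}_q\le\norm g_\infty\norm h_1$ gives boundedness; $\norm{Y(\abso{\,\cdot\,}-R)\,g(\check h*\psi)}_q\le\norm{Y(\abso{\,\cdot\,}-R)g}_\infty\norm h_1\to0$ gives tightness; and the decomposition
\[
 \tau_a(g(\check h*\psi))-g(\check h*\psi)=(\tau_a g)\big((\tau_a\check h-\check h)*\psi\big)+(\tau_a g-g)(\check h*\psi)
\]
bounds the translation increment by $\norm g_\infty\norm{\tau_a\check h-\check h}_1+\norm{\tau_a g-g}_\infty\norm h_1$, which tends to $0$ as $a\to0$, uniformly in $\psi\in B_{1,q}$, because functions in $\cC_0$ are uniformly continuous and translation is continuous on $L^1$.

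For the converse inclusion $\kappa(L^p,L^q)\subseteq t$, let $C\subseteq L^q$ be absolutely convex and compact; I must produce $g\in\cC_0$ and $h\in L^1$ with $C\subseteq\overline{g(\check h*B_{1,q})}=\cU_{g,h}^\circ$, since then $C^\circ\supseteq\cU_{g,h}$ is a $t$–neighbourhood. The idea is to factor $C$ twice, in the correct order. First, viewing $L^q$ as a Banach module over $\cC_0$ under multiplication (with the approximate identity of \Cref{lem1}), the form of the Cohen–Hewitt factorization theorem for compact sets \cite[(17.1), p.~114]{DW} yields $g\in\cC_0$ and a \emph{compact} set $C'\subseteq L^q$ with $C=g\cdot C'$. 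Second, I apply the convolution analogue of \Cref{lem1} to the bounded set $C'$ — that is, Cohen–Hewitt for $L^q$ as an $L^1$–module under convolution, whose bounded approximate identity is any mollifier net — to obtain $k\in L^1$ with $C'\subseteq k*B_{1,q}$; setting $h(x)=k(-x)\in L^1$ gives $C'\subseteq\check h*B_{1,q}$. Combining, $C=g\cdot C'\subseteq g(\check h*B_{1,q})$, as required.

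The hard part is this last factorization, and the delicate point is the \emph{order} of the two operations. The target set $\cU_{g,h}^\circ$ has the multiplier $g$ \emph{outside} the convolution, so the factorizations must be carried out as ``multiplication first, convolution second''. Peeling off $g$ first is precisely what leaves a genuinely smaller compact set $C'$ to which the convolution factorization can then be applied; by contrast, the weak (ball) form $C\subseteq gB_{1,q}$ of \Cref{lem1} would leave the non-compact ball $B_{1,q}$ and could not be refined further. This is why the compact-set version of Cohen–Hewitt is needed in the first step, and why the weaker ball version of \Cref{lem1} suffices only in the second.
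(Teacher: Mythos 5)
Your proposal is correct and takes essentially the same route as the paper's proof: the same identification $\cU_{g,h} = \bigl(g(\check h * B_{1,q})\bigr)^\circ$ via the adjoint computation, the same verification of compactness through the Fr\'echet--Riesz--Kolmogorov--Weyl criteria (boundedness and equicontinuity from $h \in L^1$, smallness at infinity from $g \in \cC_0$), and the same double application of the Cohen--Hewitt factorization theorem with $\cC_0$ acting by multiplication and $L^1$ by convolution, using Gaussian and mollifier approximate identities. Your explicit discussion of the order of the two factorizations --- peeling off $g$ first so that a \emph{compact} remainder is left for the convolution factorization --- is a correct and welcome unpacking of what the paper compresses into the phrase ``applying the factorization theorem twice''.
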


\begin{proof}
  
  (a) $t \subseteq \kappa (L^p, L^q)$ where $t$ denotes the topology generated by the seminorms $p_{g,h}$ above and let $\cU_{g,h} \coleq \{ f \in L^p: p_{g,h}(f) \le 1 \}$ be a neighborhood of $0$ in $t$. We show that
  \begin{enumerate}[label=(\roman*)]
  \item $\cU_{g,h} = (g ( \check h * B_{1,q} ) )^\circ$,
  \item $g ( \check h * B_{1,q})$ is compact in $L^q$.
  \end{enumerate}

  ad (i): If $S \in g ( \check h * B_{1,q} ) \subseteq L^q$ there exists $\psi \in B_{1,q}$ with $S = g ( \check h * \psi)$. For $f \in L^p$ with $p_{g,h}(f) \le 1$ we obtain
  \[ \abso{\langle f, S \rangle } = \abso{ \langle f, g ( \check h * \psi) \rangle } = \abso{ \langle h * (gf), \psi \rangle } \]
  and
  \[ \sup_{S \in g ( \check h * B_{1,q} )} \abso{ \langle f, S \rangle } = \norm{ h * (gf)}_p = p_{g,h}(f), \]
  so (i) follows.

  ad (ii): The 3 criteria of the M.~Fr\'echet-M.~Riesz-A.~Kolmogorov-H.~Weyl Theorem are fulfilled:
  \begin{enumerate}[label=(\alph*)]
  \item The set $g(\check h * B_{1,q})$ is bounded in $L^q$.
  \item The set $g(\check h * B_{1,q})$ is small at infinity: $\forall \e>0$ $\exists R>0$ such that $\abso{g(x)} Y(\abso{x}-R)<\e$ for all $x\in\mathbb{R}^{n}$ and hence, 
    \[
      \norm{ Y ( \abso{.} - R) g ( \check h * B_{1,q} ) }_q \le \e \norm{h}_1.
    \]
  \item The set $g(\check h * B_{1,q})$ is $L^q$-equicontinuous:
    \begin{align*}
      \|\tau_s ( g ( \check h * f ) )& - g ( \check h * f ) \|_q \le \\
      &\le \norm{ ( \tau_s g - g)\tau_s ( \check h * f ) }_q + \norm{ g ( \tau_s ( \check h * f ) - ( \check h * f)]}_q \\
      &\le \norm{ \tau_s g - g}_\infty \norm{h}_1 + \norm{g}_\infty\norm{\tau_s \check h - \check h}_1. 
    \end{align*}
    If $s$ tends to $0$, $\norm{\tau_s g - g}_\infty \to 0$ because $g$ is uniformly continuous, and $\norm{\tau_s \check h - \check h}_1 \to 0$ because the $L^1$-modulus of continuity is continuous.
  \end{enumerate}

  (b) $t \supseteq \kappa(L^p, L^q)$:

  For a compact set $C \subseteq L^q$ we have to show that there exist $g \in \cC_0$, $h \in L^1$ such that
  \[ p_{g,h}(f) = \norm{ g ( h * f)}_p \ge \sup_{s \in C} \abso{ \langle f, S \rangle } \]
  for $f \in L^p$. By applying the factorization theorem \cite[(17.1). p.~114]{DW} twice, there exist $g \in \cC_0$, $h \in L^1$ such that $C \subseteq g ( \check h * B_{1,q})$. More precisely, take $L^1$ as the Banach algebra with respect to convolution and $\cC_0$ as the Banach algebra with respect to pointwise multiplication, $L^p$ as the Banach module. As approximate units we use $\mathrm{e}^{-k^2|x|^2}$, $k>0$, in the case of $\cC_0$ and $(4\pi t)^{-n/2}\mathrm{e}^{-|x|^2/4t}$, $t>0$, in the convolution algebra~$L^1$. Then,
  \[
    \sup_{S \in C} \abso{ \langle f, S \rangle } \le \sup_{\psi \in B_{1,q} } \abso{ \langle f, g (\check h * \psi) \rangle } = \sup_{\psi \in B_{1,q}} \abso{ \langle h * (gf), \psi \rangle } = \norm{ h * (gf) }_p,
  \]
  which finishes the proof.
\end{proof}

\begin{proposition}\label{cor8}Let $1 \le q < \infty$ and $C \subseteq L^q$. Then,
  \[
    C\textrm{ compact}\Longleftrightarrow \exists g \in \cC_0\ \exists h \in L^1:\ C \subseteq g ( h * B_{1,q} ).
  \]
\end{proposition}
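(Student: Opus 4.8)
The plan is to read both implications off the proof of \Cref{prop8}, rearranging its two factorisation steps; almost nothing new is needed. Throughout I use that $h\in L^1$ if and only if $\check h\in L^1$, so that the two families $\{g(h*B_{1,q}):g\in\cC_0,\,h\in L^1\}$ and $\{g(\check h*B_{1,q}):g\in\cC_0,\,h\in L^1\}$ coincide; this lets me pass freely between the shape appearing here and the shape $g(\check h*B_{1,q})$ used in \Cref{prop8}.

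For ``$\Leftarrow$'', suppose $C\subseteq g(h*B_{1,q})$ for some $g\in\cC_0$ and $h\in L^1$. In item (ii) of part (a) of the proof of \Cref{prop8} it is shown, by verifying the three conditions of the Fr\'echet--Riesz--Kolmogorov--Weyl theorem (boundedness, smallness at infinity, and $L^q$-equicontinuity), that $g(\check h*B_{1,q})$ is compact in $L^q$; replacing $h$ by $\check h$ gives that $g(h*B_{1,q})$ is compact as well. Hence $C$ is contained in a compact set and is therefore relatively compact (and compact if it is in addition closed).

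For ``$\Rightarrow$'', let $C\subseteq L^q$ be compact. The idea is to apply the Cohen--Hewitt factorisation theorem \cite[(17.1), p.~114]{DW} twice, but in the order that places the pointwise factor on the \emph{outside} and the convolution factor on the \emph{inside}. First, regarding $L^q$ as a Banach module over the Banach algebra $\cC_0$ under pointwise multiplication (approximate identity $\{e^{-k^2\abso{x}^2}:k>0\}$) and using the factorisation theorem in its form for compact sets, I obtain $g\in\cC_0$ and a \emph{compact} set $D\subseteq L^q$ with $C=gD$. Next, applying the convolution analogue of \Cref{lem1} --- the factorisation theorem for $L^q$ as a Banach module over the convolution algebra $L^1$ with approximate identity the heat kernels $\{(4\pi t)^{-n/2}e^{-\abso{x}^2/4t}:t>0\}$ --- to the compact, hence bounded, set $D$ yields $h\in L^1$ with $D\subseteq h*B_{1,q}$. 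Combining the two inclusions gives $C\subseteq g(h*B_{1,q})$, as required.

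The routine parts (the three compactness estimates and the verification that $\cC_0$ and $L^1$ act on $L^q$ with bounded approximate identities) are precisely those already carried out for \Cref{prop8} and \Cref{lem1}. The one genuine point of care is in the forward direction: one must factor with the pointwise module structure \emph{before} the convolution one, and for that outer step one needs the compact-set form of Cohen--Hewitt, so that the leftover set $D$ remains compact and can then itself be written as $h*B_{1,q}$ via the ball version; performing the factorisations in the opposite order would produce the unusable shape $h*(gB_{1,q})$. The minor caveat already mentioned is that the right-hand condition literally characterises relative compactness, genuine compactness of $C$ requiring additionally that $C$ be closed.
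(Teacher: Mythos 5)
Your proof is correct and is essentially the paper's own derivation: the paper states this proposition as a byproduct of the proof of \Cref{prop8}, with the backward implication coming from the Fr\'echet--Riesz--Kolmogorov--Weyl verification in part (a)(ii) and the forward implication from the double Cohen--Hewitt factorization in part (b), applied in exactly the order you insist on (pointwise factor outside, convolution factor inside, using the compact-set form for the outer step). Your caveat that the right-hand condition literally characterises \emph{relative} compactness of $C$ (so one should either assume $C$ closed or read the conclusion accordingly) is a fair observation that applies equally to the paper's formulation.
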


An exactly analogous reasoning as for Proposition \ref{prop8} yields

\begin{proposition}\label{prop9}The topology $\kappa(\cM^1, \cC_0)$ of the space $\cM^1_c$ is generated by the seminorms
  \[
    \cM^1 \to \bR_+,\ \mu \mapsto p_{g,h}(\mu) \coleq \norm{ h * (g\mu)}_1.
  \]
\end{proposition}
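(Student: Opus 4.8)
The plan is to follow the two-part structure of the proof of \Cref{prop8} closely, replacing the Banach module $L^q$ by $\cC_0$ and the Fréchet--Riesz--Kolmogorov--Weyl criterion by the Arzelà--Ascoli theorem for $\cC_0$ employed in \Cref{prop3p1}; here $g\in\cC_0$ and $h\in L^1$. Throughout, the pairing is that of $\cM^1 = (\cC_0)'$, I write $\cU_{g,h} \coleq \{\mu \in \cM^1 : p_{g,h}(\mu) \le 1\}$, and $B_{1,\cC_0}$ denotes the unit ball of $\cC_0$. First I would record, by Fubini's theorem exactly as in \Cref{prop8}, the transposition identity $\langle \psi, h*(g\mu)\rangle = \langle g(\check h*\psi), \mu\rangle$, valid for $\psi \in \cC_0$ and $\mu \in \cM^1$. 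Since $h*(g\mu) \in L^1$ and the $L^1$-norm of an integrable function coincides with its norm as a functional on $\cC_0$, this yields
\[
  p_{g,h}(\mu) = \|h*(g\mu)\|_1 = \sup_{\psi \in B_{1,\cC_0}} \abso{\langle g(\check h*\psi), \mu\rangle},
\]
so that $\cU_{g,h} = A^\circ$, where $A \coleq g(\check h * B_{1,\cC_0})$ is an absolutely convex subset of $\cC_0$ (note $\check h*\psi \in \cC_0$ and hence $g(\check h*\psi)\in\cC_0$ for $\psi\in\cC_0$).

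For the inclusion $t \subseteq \kappa(\cM^1, \cC_0)$ it then suffices to prove that $A$ is relatively compact in $\cC_0$, since then $\overline A$ is absolutely convex and compact and $\cU_{g,h} = (\overline A)^\circ$ is a $\kappa$-neighbourhood of $0$. I would verify the three Arzelà--Ascoli conditions as in \Cref{prop3p1}: boundedness from $\|g(\check h*\psi)\|_\infty \le \|g\|_\infty\|h\|_1$; equicontinuity at finite points from
\[
  \|\tau_s(g(\check h*\psi)) - g(\check h*\psi)\|_\infty \le \|\tau_s g - g\|_\infty\|h\|_1 + \|g\|_\infty\|\tau_s\check h - \check h\|_1 \xrightarrow{s\to 0} 0;
\]
and uniform vanishing at infinity from $\abso{g(x)(\check h*\psi)(x)} \le \abso{g(x)}\,\|h\|_1$ together with $g\in\cC_0$, all estimates being uniform in $\psi\in B_{1,\cC_0}$.

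For the reverse inclusion $\kappa(\cM^1,\cC_0) \subseteq t$, given an absolutely convex compact set $C \subseteq \cC_0$ I would produce $g \in \cC_0$ and $h \in L^1$ with $C \subseteq g(\check h * B_{1,\cC_0})$ by applying the Cohen--Hewitt factorization theorem twice: once to the pointwise-multiplication module structure of $\cC_0$ over the Banach algebra $\cC_0$, and once to the convolution module structure over $(L^1,*)$, using in both cases the Gaussian approximate identities already employed in \Cref{lem1} and \Cref{prop8}. Then $C^\circ \supseteq (g(\check h*B_{1,\cC_0}))^\circ = \cU_{g,h}$, so $C^\circ$ is a $t$-neighbourhood; together with the first part this gives $t = \kappa(\cM^1,\cC_0)$.

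The one step requiring genuine care, rather than routine transcription from \Cref{prop8,prop3p1}, is the compactness argument in the first part: in contrast to the $L^q$-setting one must invoke Arzelà--Ascoli for $\cC_0$ and, in particular, establish uniform smallness at infinity, which is precisely where the factor $g\in\cC_0$ is indispensable. The duality identity and the double factorization are otherwise entirely parallel to the proof of \Cref{prop8}.
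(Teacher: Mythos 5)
Your proposal is correct and follows essentially the same route as the paper, which simply declares the proof to be ``exactly analogous'' to that of \Cref{prop8} and records the needed ingredients (the duality $\norm{\cdot}_1 = \sup_{\psi\in B_{1,\cC_0}}\abso{\langle\psi,\cdot\rangle}$ and the continuity of $\cC_0\times\cM^1\to\cM^1$ and $L^1\times\cM^1\to L^1$). Your write-up fleshes out precisely this analogy --- the polar identity $\cU_{g,h}=(g(\check h * B_{1,\cC_0}))^\circ$, relative compactness in $\cC_0$ via Arzel\`a--Ascoli as in \Cref{prop3p1}, and the double Cohen--Hewitt factorization for the reverse inclusion --- so it matches the paper's intended argument.
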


\begin{proof}
  For this, note that $\norm{\mu}_1 = \sup \{ \abso{\langle \varphi, \mu \rangle }: \varphi \in \cC_0, \norm{\varphi}_\infty \le 1 \}$ and that the multiplication
  \[
    \cC_0 \times \cM^1 \to \cM^1,\quad (g,\mu) \mapsto g \cdot \mu
  \]
  and the convolution
  \[
    L^1 \times \cM^1 \to L^1, (h,\nu) \mapsto h * \nu
  \]
  are continuous \cite[Thm~6.4.20, p.~150]{Sch3}).
\end{proof}
  
\begin{proposition}\label{cor9}
  Let $C \subseteq \cM^{1}$. Then,
  \[
    C\textrm{ compact}\Longleftrightarrow \exists g \in \cC_0\ \exists h \in L^1:\ C \subseteq g ( h * B_{1,1} ).
  \]
\end{proposition}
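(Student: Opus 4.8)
The plan is to deduce both implications from the proof of \Cref{prop9}, exactly as \Cref{cor8} is read off from that of \Cref{prop8}; throughout, $B_{1,1}$ denotes the unit ball of $\cM^1$ (total variation $\le 1$). For the implication ``$\Leftarrow$'' I would first observe that $g(h*B_{1,1})\subseteq L^1$: convolving $h\in L^1$ with a bounded measure yields an $L^1$-function, and multiplication by $g\in\cC_0\subseteq L^\infty$ preserves $L^1$. Since the total-variation norm restricts to the $L^1$-norm on $L^1\subseteq\cM^1$, relative compactness of this set in $\cM^1$ is the same as relative compactness in $L^1$, and I would verify the three criteria of the Fr\'echet-Riesz-Kolmogorov-Weyl theorem \cite[Thm.~6.4.12, p.~140]{Sch3} for $\{g(h*\nu):\nu\in B_{1,1}\}$ just as in \Cref{prop9}. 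Boundedness follows from Young's convolution inequality and $\norm{g}_\infty$; smallness at infinity from $g\in\cC_0$; and $L^1$-equicontinuity from the splitting $\tau_s(g(h*\nu))-g(h*\nu)=(\tau_s g-g)\,\tau_s(h*\nu)+g\,(\tau_s(h*\nu)-(h*\nu))$ together with $\tau_s(h*\nu)=(\tau_s h)*\nu$, which bounds the two terms by $\norm{\tau_s g-g}_\infty\norm{h}_1$ and $\norm{g}_\infty\norm{\tau_s h-h}_1$, both tending to $0$ uniformly in $\nu$. As $C$ is assumed closed, relative compactness upgrades to compactness.

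For ``$\Rightarrow$'' I would apply the Cohen-Hewitt factorization theorem \cite[(17.1), p.~114]{DW} twice, mirroring \Cref{prop8}. First regard $\cM^1$ as a Banach module over the pointwise-multiplication algebra $\cC_0$ with approximate unit $e^{-k^2\abso{x}^2}$ ($k\to 0$); since $\norm{(e^{-k^2\abso{x}^2}-1)\mu}_{\cM^1}\to 0$ by dominated convergence, $\cM^1$ is essential over $\cC_0$, so a compact $C$ factors as $C=g\cdot K_1$ with $g\in\cC_0$ and $K_1$ compact in $\cM^1$. Then I would factor $K_1$ over the convolution algebra $L^1$ with approximate unit $(4\pi t)^{-n/2}e^{-\abso{x}^2/4t}$ ($t\to 0$) to write $K_1\subseteq h*B_{1,1}$, yielding $C\subseteq g(h*B_{1,1})$.

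The hard part will be this second factorization, and it is also where the statement needs care. In contrast to $L^q$ in \Cref{cor8}, $\cM^1$ is \emph{not} essential under convolution: for a singular measure one has, e.g., $\norm{(4\pi t)^{-n/2}e^{-\abso{x}^2/4t}*\delta_0-\delta_0}_{\cM^1}=\norm{(4\pi t)^{-n/2}e^{-\abso{x}^2/4t}}_1+1=2$, and in general $L^1*\cM^1=L^1$. Hence the convolution-factorization can only place $K_1$ inside $h*B_{1,1}\subseteq L^1$ when $K_1$ already lies in $L^1$. I would resolve this by noting that the right-hand side $g(h*B_{1,1})$ is automatically contained in $L^1$, so the condition $C\subseteq g(h*B_{1,1})$ forces $C\subseteq L^1$ from the start; the effective content of the proposition is therefore the characterisation of those norm-compact subsets of $\cM^1$ that lie in $L^1$. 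For such $C$ the first ($\cC_0$-)factorisation delivers $K_1\subseteq L^1$, and the $L^1$-essential convolution-factorisation then applies verbatim, completing ``$\Rightarrow$''. For genuinely singular compacta such as $\{\delta_0\}$ the inclusion cannot hold, so this essentiality gap is the one point at which both the argument and the statement require the proviso $C\subseteq L^1$.
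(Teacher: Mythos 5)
Your proof is essentially correct, and it follows the route the paper itself intends: \Cref{cor9} is stated without proof as a byproduct of \Cref{prop9}, whose ``exactly analogous reasoning'' is the double Cohen--Hewitt factorization plus Fr\'echet--Riesz--Kolmogorov--Weyl verification from the proof of \Cref{prop8}. Your three-criteria check for $g(h*B_{1,1})$ (boundedness via Young, smallness at infinity from $g\in\cC_0$, equicontinuity via the splitting $(\tau_s g-g)\,\tau_s(h*\nu)+g\,(\tau_s(h*\nu)-h*\nu)$) is exactly part (a)(ii) of that proof transplanted to $\cM^1$, so on the ``$\Leftarrow$'' direction you and the paper coincide.

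The substantive contribution of your write-up is the observation that the statement fails as printed, and you are right. Since $h\in L^1$, every $h*\nu$ with $\nu\in\cM^1$ is absolutely continuous, so $g(h*B_{1,1})\subseteq L^1$ under either reading of $B_{1,1}$ (unit ball of $L^1$ or of $\cM^1$); hence $C=\{\delta_0\}$, which is norm-compact in $\cM^1$ but singular, can never satisfy the inclusion, and ``$\Rightarrow$'' is false for singular compacta. Your diagnosis of the failure point is also the correct one: Cohen--Hewitt factorizes compact sets only within the \emph{essential part} of the module, and the essential part of the convolution module $\cM^1$ over $L^1$ is $L^1$ (your computation $\norm{h_t*\delta_0-\delta_0}_{\cM^1}=2$ exhibits the non-essentiality directly), whereas the multiplication module over $\cC_0$ is essential --- which is precisely why the first factorization goes through and the second does not. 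With the proviso $C\subseteq L^1$ the proposition is true but, as you note, collapses to \Cref{cor8} with $q=1$. Two minor remarks: the proposition does not assume $C$ closed, so ``$\Leftarrow$'' can only yield relative compactness (your parenthetical ``as $C$ is assumed closed'' should instead be a replacement of ``compact'' by ``relatively compact'', a defect already present in \Cref{cor8}); and \Cref{prop9} itself is unaffected by your counterexample, since its proof only requires factorizing compact subsets of the predual $\cC_0$, where both module actions are essential, not compact subsets of $\cM^1$.
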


\textbf{Acknowledgments. } E.~A.~Nigsch was supported by the Austrian Science Fund (FWF) grant P26859.

\newcommand{\bibarxiv}[1]{arXiv: \href{http://arxiv.org/abs/#1}{\texttt{#1}}}
\newcommand{\bibdoi}[1]{{\sc doi:} \href{http://dx.doi.org/#1}{\texttt{#1}}}

\printbibliography

\end{document}